\title[Regularity of solutions to NS with $\bmo^{-1}$ initial data]{On regularity of solutions to the Navier--Stokes equation with initial data in $\bmo^{-1}$}
\author{Hedong Hou}
\address{Universit{\'e} Paris-Saclay, CNRS, Laboratoire de Math\'{e}matiques d'Orsay, 91405 Orsay, France}
\email{hedong.hou@universite-paris-saclay.fr}
\date{December 18, 2024}
\keywords{Navier--Stokes equations, regularity, long-time behavior, homogeneous Hardy--Sobolev spaces, tent spaces}
\subjclass{35Q30, %%% Navier-Stokes equations
42B37, %%% Harmonic analysis and PDEs
76D03. %%% Existence, uniqueness, and regularity theory for incompressible viscous fluids
}
\begin{document}

\begin{abstract}
    We prove that any mild solution in the Koch--Tataru space to the incompressible Navier--Stokes equation with initial data in $\bmo^{-1}$ is weak*-continuous in time, valued in $\bmo^{-1}$. We also show that the global mild solution vanishes in $\bmo^{-1}$ at infinity in time.
\end{abstract}
\maketitle

\section{Introduction}
\label{sec:intro}

This paper is concerned with regularity of mild solutions to the Cauchy problem of the incompressible Navier--Stokes equation
\begin{equation}
    \tag{NS}
    \label{e:NS}
    \begin{cases}
        \partial_t u + (u \cdot \nabla) u = \Delta u - \nabla p, \\
        \nabla \cdot u = 0, \\
        u(0)=u_0, \quad \nabla \cdot u_0 = 0,
    \end{cases}
\end{equation}
where $u:[0,\infty) \times \bR^n \to \bR^n$ denotes the unknown velocity vector, and $p:[0,\infty) \times \bR^n \to \bR$ denotes the unknown scalar pressure. Let $0<T \le \infty$. For a divergence-free tempered distribution $u_0$, we say $u$ is a \emph{mild} solution to \eqref{e:NS} with initial data $u_0$ if it satisfies the integral equation
\[ u(t) = e^{t\Delta} u_0 - B(u,u)(t) \]
in the sense of distributions $\scrD'((0,T) \times \bR^n)$,
\footnote{We denote spaces of scalar-valued and spaces of vector-valued functions or distributions in the same way.}
where the bilinear operator $B$ is formally given by the integral
\begin{equation}
    \label{e:B}
    B(u,u)(t) := \int_0^t e^{(t-s)\Delta} \bP \Div (u \otimes u)(s) ds, \quad t \in (0,T).
\end{equation}
Here, $\bP$ denotes the Leray projection on divergence-free vector fields. 

Based on the scaling property of the equation
\[ u(t,x) \mapsto \lambda u(\lambda^2 t,\lambda x), \]
there is a chain of scale-invariant (or called \emph{critical}) spaces
\[ \DotH^{\frac{n}{2}-1} \hookrightarrow L^n \hookrightarrow \DotB^{-1+\frac{n}{p}}_{p,\infty} \hookrightarrow \bmo^{-1} \hookrightarrow \DotB^{-1}_{\infty,\infty}, \quad 2 \le n<p<\infty. \]
The pioneering work of Fujita--Kato \cite{Fujita-Kato1964_Hn/2-1} establishes global existence of mild solutions $u \in C([0,\infty);\DotH^{n/2-1})$, provided that the initial data $u_0$ is small. Such existence results are also valid in $L^n$ due to Kato \cite{Kato1984-Ln} and in $\DotB^{-1+n/p}_{p,\infty}$ (with weak*-topology) due to Cannone \cite{Cannone1997-Besov-sge} and Planchon \cite{Planchon1998-Besov-sgwp}. On the other hand, the work of Bourgain--Pavlovi\'c \cite{Bourgain-Pavlovic2008-IPB-1} addresses ill-posedness of \eqref{e:NS} in $\DotB^{-1}_{\infty,\infty}$. See also the works of Yoneda \cite{Yoneda2010-IP-F-1_infty-q} and Wang \cite{Wang2015-IP-B-1_infty-q}.

For $\bmo^{-1}$, Koch--Tataru \cite{Koch-Tataru2001-BMO-1} establish global existence of mild solutions $u \in X_\infty$ (see Section \ref{sec:spaces} for definition) for small initial data. Auscher--Dubois--Tchamitchian \cite{Auscher-Dubois-Tchamitchian2004-BMO-1} further show that the solution $u$ belongs to $L^\infty((0,\infty);\bmo^{-1})$. Miura--Sawada \cite{Miura-Sawada2006_analyticity} and Germain--Pavlovi\'c--Staffilani \cite{Germain-Pavlovic-Staffilani2007-Analyticity} obtain spatial analyticity of $u$. But time regularity remains unknown.

The main goal of this paper is to address this issue with the following result.

\begin{theorem}
    \label{thm:cont-bmo-1}
    Let $0<T \le \infty$. Let $u_0 \in \bmo^{-1}$ be divergence free and $u \in X_T$ be a mild solution to the Navier--Stokes equation \eqref{e:NS} with initial data $u_0$. Then $u$ belongs to $C([0,T);\bmo^{-1})$ with $u(0)=u_0$ and
    \begin{equation}
        \label{e:bd-u-C(bmo-1)}
        \sup_{0 \le t < T} \|u(t)\|_{\bmo^{-1}} \lesssim \|u_0\|_{\bmo^{-1}} + \|u\|_{X_T}^2.
    \end{equation}
\end{theorem}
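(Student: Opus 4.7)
The plan is to split the mild solution as $u(t) = e^{t\Delta} u_0 - B(u,u)(t)$ and treat the two pieces separately. Since strong continuity of the heat semigroup on $\bmo^{-1}$ fails in general (Schwartz functions are not strongly dense), the continuity asserted in Theorem \ref{thm:cont-bmo-1} should be read in the weak*-sense, i.e.\ with respect to the homogeneous Hardy--Sobolev predual of $\bmo^{-1}$ highlighted in the keywords and abstract.

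For the linear part $e^{t\Delta} u_0$, I would first establish weak*-continuity from $[0,T)$ into $\bmo^{-1}$ together with the contractivity bound $\|e^{t\Delta} u_0\|_{\bmo^{-1}} \le \|u_0\|_{\bmo^{-1}}$ by a duality argument: the heat semigroup is strongly continuous and contractive on the Hardy--Sobolev predual (where Schwartz functions are dense), and the adjoint action on $\bmo^{-1}$ inherits both properties.

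For the bilinear part, the crucial step is the uniform estimate $\|B(u,u)(t)\|_{\bmo^{-1}} \lesssim \|u\|_{X_T}^2$ for $t \in [0,T)$. The natural route is the Carleson-measure characterization $\|f\|_{\bmo^{-1}}^2 \approx \sup_B |B|^{-1} \int_0^{r_B^2} \int_B |e^{\sigma \Delta} f|^2 \,dx\, d\sigma$, which matches the spatial component of the $X_T$ norm. Writing $e^{\sigma \Delta} B(u,u)(t)$ as an Oseen-kernel integral in $\tau \in (0,t)$ and applying a Koch--Tataru-type bilinear tent-space estimate should yield the bound. For continuity at interior points, I would split $B(u,u)(t) - B(u,u)(s)$ into an integral over $[s,t]$, small by the same bilinear bound applied on a short time interval, and an integral over $[0,s]$ of a semigroup difference $(e^{(t-\tau)\Delta} - e^{(s-\tau)\Delta})\, \bP \Div (u \otimes u)(\tau)$, small by the linear step pointwise in $\tau$ combined with dominated convergence, with the uniform bilinear bound as majorant. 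Taking $s = 0$ gives $B(u,u)(t) \to 0$ weakly*, and hence $u(t) \to u_0$ weakly*; the estimate \eqref{e:bd-u-C(bmo-1)} then follows by summing the linear and bilinear bounds.

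The main obstacle is the bilinear estimate itself: unlike the original Koch--Tataru bound, which controls the $X_T$ (space-time) norm of $B(u,u)$, here one needs the Carleson quantity of $B(u,u)(t)$ at a single fixed time, with an extra heat smoothing parameter $\sigma$. This requires carefully reorganizing the resulting quadruple integral over $(\tau, \sigma)$ and the Carleson ball, and exploiting both Oseen decay and the Carleson bound on $u \otimes u$ ensured by $u \in X_T$. Once this estimate is in place, the continuity statement and \eqref{e:bd-u-C(bmo-1)} follow cleanly from dominated convergence and the linear step.
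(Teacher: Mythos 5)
Your overall architecture (split $u=e^{t\Delta}u_0-B(u,u)$, treat the linear part by duality with the Hardy--Sobolev predual, prove a uniform $\bmo^{-1}$ bound for $B(u,u)$, then get weak*-continuity) matches the paper's, and your treatment of the linear part and of the uniform bound \eqref{e:B(u,u)-Linfty-BMO-1} (which the paper simply quotes from Auscher--Dubois--Tchamitchian) is sound. However, there is a genuine gap at the single most important point: continuity at $t=0$. You claim that the piece $\int_s^t e^{(t-\tau)\Delta}\bP\Div(u\otimes u)(\tau)\,d\tau$ is ``small by the same bilinear bound applied on a short time interval,'' and that taking $s=0$ gives $B(u,u)(t)\to 0$ weakly*. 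But the bilinear bound on $[0,t]$ only yields $\|B(u,u)(t)\|_{\bmo^{-1}}\lesssim\|u\|_{X_t}^2$, and $\|u\|_{X_t}$ does \emph{not} tend to $0$ as $t\to 0+$ for a general mild solution with $\bmo^{-1}$ data --- that local smallness is exactly the condition \eqref{e:local-small-VMO-1}, valid for $\vmo^{-1}$ data (Koch--Tataru, Dubois), whose removal is the entire point of the theorem. A self-similar solution, for which $\|u\|_{X_t}$ is independent of $t$, shows the mechanism you invoke cannot work. A uniform bound plus shrinking domain gives boundedness, not convergence to $0$, in the weak* topology.

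What is actually needed, and what the paper supplies, is a proof that $\langle B(u,u)(t),\varphi\rangle\to 0$ for each fixed $\varphi$ in the predual $\DotH^{1,1}$. The paper achieves this by decomposing $\cL=\cL_0\circ\cR+\cL_1$ (following Auscher--Frey), then pairing with $\varphi\in\scrS_\infty$ and using tent-space duality: the bulk region $s<t/2$ is controlled by $\|g\|_{T^{\infty,2}}\,\cA_t(\varphi)$, where the dual functional $\cA_t(\varphi)\to 0$ as $t\to 0$ (Lemma \ref{lemma:A_t}, proved via Hardy-space atoms and Gaussian bounds), and the singular region $s\approx t$, where the heat kernel does not smooth, is handled by a dyadic-in-time iteration using only the $L^\infty_{-1/2}$ bound on $g$ (Lemma \ref{lemma:f-layer}). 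None of this machinery appears in your proposal, and without some substitute for it the $t=0$ step does not close. A secondary, more repairable issue: in your interior-continuity argument, the natural majorant $\|\bP\Div(u\otimes u)(\tau)\|_{\bmo^{-1}}\lesssim\tau^{-1}\|u\|_{X_T}^2$ is not integrable near $\tau=0$, so the dominated convergence step for the $[0,s]$ integral also needs the finer tent-space input rather than a crude pointwise bound.
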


Here and in the sequel, $\bmo^{-1}$ is equipped with the weak*-topology with respect to the homogeneous Hardy--Sobolev space $\DotH^{1,1}$, see Section \ref{sec:spaces} for details.

The next result is concerned with the long-time behavior of global mild solutions. We refer the reader to \cite{Gallagher-Iftimie-Planchon2002-long-H1/2} for motivation to study the asymptotic behavior of global mild solutions, and to \cite{Planchon1998-Besov-sgwp,Gallagher-Iftimie-Planchon2003-long-L3-Besov} for results in other critical spaces.
\begin{theorem}
    \label{thm:long-time}
    Let $u_0 \in \bmo^{-1}$ be divergence free and $u \in X_\infty$ be a global mild solution to the Navier--Stokes equation \eqref{e:NS} with initial data $u_0$. Then $u(t)$ converges to 0 as $t \to \infty$ in $\bmo^{-1}$ (also endowed with the weak*-topology).
\end{theorem}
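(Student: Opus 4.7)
The plan is to exploit the mild identity $u(t)=e^{t\Delta}u_0-B(u,u)(t)$ together with the duality $(\DotH^{1,1})^\ast=\bmo^{-1}$. By Theorem~\ref{thm:cont-bmo-1} the orbit $\{u(t)\}_{t\ge 0}$ is uniformly bounded in $\bmo^{-1}$, and since $u(t)$ is divergence free I may test only against divergence-free $\varphi$, i.e.\ against $\bP\varphi$. A standard $\varepsilon$-approximation then reduces the weak*-convergence $u(t)\rightharpoonup^\ast 0$ to showing $\langle u(t),\varphi\rangle\to 0$ for $\varphi$ in a convenient dense subset $D$ of the divergence-free part of $\DotH^{1,1}$, for which I would take divergence-free Schwartz fields.

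For the linear piece, dualize:
\[ |\langle e^{t\Delta}u_0,\varphi\rangle|=|\langle u_0,e^{t\Delta}\varphi\rangle|\le \|u_0\|_{\bmo^{-1}}\|e^{t\Delta}\varphi\|_{\DotH^{1,1}}. \]
Using $\|\psi\|_{\DotH^{1,1}}\simeq\|\nabla\psi\|_{H^1}$ and the commutativity of $\nabla$ with $e^{t\Delta}$, this reduces to $\|e^{t\Delta}f\|_{H^1}\to 0$ as $t\to\infty$ for $f\in H^1$. I would verify this first on an $H^1$-atom $a$ supported in $B_r(x_0)$ by Taylor-expanding the Gaussian kernel at $x_0$ and using $\int a=0$, obtaining $\|e^{t\Delta}a\|_{H^1}\lesssim r/\sqrt{t}\to 0$; atomic density of $H^1$ and contractivity of $e^{t\Delta}$ extend this to general $f$.

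For the bilinear piece, fix a large threshold $T_0>0$ and decompose, for $t>T_0$,
\[ B(u,u)(t)=e^{(t-T_0)\Delta}B(u,u)(T_0)+\int_{T_0}^{t}e^{(t-s)\Delta}\bP\Div(u\otimes u)(s)\,ds. \]
The first summand is $e^{(t-T_0)\Delta}$ applied to the fixed element $B(u,u)(T_0)\in\bmo^{-1}$, whose pairing with $\varphi$ vanishes as $t\to\infty$ (with $T_0$ frozen) by the same argument as in the linear step. Pairing the second summand with $\varphi$ and integrating by parts yields $-\int_{T_0}^{t}\langle(u\otimes u)(s),\nabla e^{(t-s)\Delta}\bP\varphi\rangle\,ds$, which I would split at, say, $s=t-\sqrt{t-T_0}$: on the earlier range the heat factor $\|\nabla e^{(t-s)\Delta}\bP\varphi\|_{L^1}$ is small by the Hardy-space argument above applied to $\nabla\bP\varphi\in H^1$, while on the later range the pointwise decay $\|u(s)\|_{L^\infty}\lesssim s^{-1/2}\|u\|_{X_\infty}$ is expected to take over.

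The main obstacle I foresee sits precisely at this critical borderline: the plain $L^\infty$-$L^1$ pairing on the late range produces only an $O(1)$ bound (a logarithm) rather than $o(1)$, because the factor $\|u(s)\|_{L^\infty}^2\sim s^{-1}$ and the lifespan $\sqrt{t-T_0}$ conspire at the critical scaling. To circumvent this I would replace the crude $L^\infty$-bound for $u\otimes u$ by the Koch--Tataru bilinear tent-space estimate, and show that the piece of the parabolic Carleson mass $|u|^2\,dy\,ds$ carried by time slabs close to $s=t$ vanishes as $t\to\infty$. This tail-vanishing of the $X_\infty$-norm on late time windows is, in my view, the real technical content of the theorem, and I would expect to prove it by a self-improvement bootstrap feeding the uniform control $\sup_{t\ge 0}\|u(t)\|_{\bmo^{-1}}<\infty$ from Theorem~\ref{thm:cont-bmo-1} back into the mild-solution identity.
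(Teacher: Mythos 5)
Your overall architecture is sensible and genuinely different from the paper's: you split off $e^{t\Delta}u_0$, freeze the bilinear term at a time $T_0$ so that $B(u,u)(t)=e^{(t-T_0)\Delta}B(u,u)(T_0)+\int_{T_0}^{t}e^{(t-s)\Delta}\bP\Div(u\otimes u)(s)\,ds$, and handle the frozen part by heat decay on the predual. The paper instead proves the stronger Proposition \ref{prop:L} (that $\cL(f)\in C_0([0,\infty);\bmo^{-1})$ for every $f\in L^\infty_{-1}(\bR^{1+n}_+)\cap T^{\infty,2}_{-1/2}\cap T^{\infty,1}$) via the decomposition $\cL=\cL_0\circ\cR+\cL_1$ and tent-space duality. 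Your treatment of the linear piece (decay of $e^{t\Delta}$ on $H^1$ via atoms, hence weak*-decay on $\bmo^{-1}$) is correct and is essentially what the paper uses implicitly.

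The genuine gap is the key step you propose for the bilinear tail. The ``tail-vanishing of the $X_\infty$-norm on late time windows'' is false: by Remark \ref{rem:sharp}(ii) there exist nonzero self-similar solutions $u(t,x)=t^{-1/2}U(x/\sqrt{t})$ in $X_\infty$, for which $\sqrt{s}\,\|u(s)\|_{L^\infty}=\|U\|_{L^\infty}$ for all $s$ and the Carleson content of $|u|^2\,dy\,ds$ on any critically scaled late window is constant in $t$ by scale invariance. No bootstrap from $\sup_t\|u(t)\|_{\bmo^{-1}}<\infty$ can produce decay of a scale-invariant quantity that these solutions conserve exactly. The smallness has to come from the test-function side, not the solution side; this is precisely the paper's mechanism ($\cA_t(\varphi)\to0$ as $t\to\infty$ in Lemma \ref{lemma:A_t}, summed over dyadic layers approaching $s=t$ via Lemma \ref{lemma:f-layer}). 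In fact your scheme closes without the false lemma: for $\varphi$ in the dense class $\scrS_\infty$ one has the quantitative rate $\|e^{\tau\Delta}\bP\nabla\varphi\|_{\DotH^{0,1}}\lesssim\min\bigl(\|\varphi\|_{\DotH^{1,1}},\,\tau^{-1/2}\|\varphi\|_{\DotH^{0,1}}\bigr)$, whence $\int_{T_0}^{t}\|u(s)\|_{L^\infty}^{2}\,\|e^{(t-s)\Delta}\bP\nabla\varphi\|_{L^1}\,ds\lesssim\|u\|_{X_\infty}^2\,t^{-1/2}\bigl(\log(t/T_0)+1\bigr)\to0$, splitting at $s=t/2$. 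Note also that your diagnosis of where the logarithm lives is inverted: with the bounded-in-$L^1$ heat factor the late range $s\in(t-\delta,t)$ contributes only $O(\delta/t)$, while the potentially logarithmic piece is the early range $s\le t/2$, where $\int s^{-1}\,ds$ diverges and must be beaten by the $\tau^{-1/2}$ decay of the heat-evolved test function.
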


Let us briefly discuss the earlier works. Koch--Tataru \cite{Koch-Tataru2001-BMO-1} establish local well-posedness of \eqref{e:NS} for initial data in the closure of Schwartz functions in $\bmo^{-1}$, denoted by $\vmo^{-1}$. More precisely, for any divergence-free $u_0 \in \vmo^{-1}$, there exist a time $T>0$ and a mild solution $u \in X_T$ to \eqref{e:NS} with initial data $u_0$, which also satisfies
\begin{equation}
    \label{e:local-small-VMO-1}
    \lim_{\tau \to 0+} \|u\|_{X_\tau} = 0.
\end{equation}
Dubois \cite{Dubois2002-these} proves that any mild solution $u \in X_T$ with initial data $u_0 \in \vmo^{-1}$ is strongly continuous in time, valued in $\bmo^{-1}$. 

Next, Dubois \cite{Dubois2002-these} and Auscher--Dubois--Tchamitchian \cite{Auscher-Dubois-Tchamitchian2004-BMO-1} present the long-time limit of $u$, namely, any global mild solution $u \in X_\infty$ with initial data $u_0 \in \vmo^{-1}$ satisfies
\begin{equation}
    \label{e:long-VMO-1}
    \lim_{t \to \infty} \sqrt{t} \|u(t)\|_{L^\infty} = 0, \quad \lim_{t \to \infty} \|u(t)\|_{\bmo^{-1}} = 0.
\end{equation}
See also the work of Germain \cite{Germain2006-2DVMO-1-glexist-long} for a particular 2D case. 

Therefore, Theorems \ref{thm:cont-bmo-1} and \ref{thm:long-time} could be understood as the complement of \cite{Dubois2002-these,Auscher-Dubois-Tchamitchian2004-BMO-1} without local smallness \eqref{e:local-small-VMO-1}.

\begin{remark}
    \label{rem:sharp}
    In fact, Theorems \ref{thm:cont-bmo-1} and \ref{thm:long-time} are sharp. 
    \begin{enumerate}[label=(\roman*)]
        \item \label{item:sharp_cont}
        The heat semigroup $(e^{t\Delta})$ is merely weak*-continuous on $\bmo^{-1}$, so the weak*-continuity of mild solutions with $\bmo^{-1}$-initial data, as asserted in Theorem \ref{thm:cont-bmo-1}, might possibly be the best expected. In comparison, $(e^{t\Delta})$ is strongly continuous in $\vmo^{-1}$, which aligns with the strong continuity of mild solutions with $\vmo^{-1}$-initial data, as obtained by Dubois.

        \item \label{item:sharp_long}
        The long-time behavior in Theorem \ref{thm:long-time} fails if we consider the strong topology of $\bmo^{-1}$. Indeed, let $u_0 \in \bmo^{-1}$ be non-zero, divergence free, and homogeneous of degree $-1$, with small $\bmo^{-1}$-norm. Then there exists a unique small self-similar global mild solution $u \in X_\infty$ to \eqref{e:NS} with initial data $u_0$, i.e.,
        \[ u(t,x) = \frac{1}{\sqrt{t}} U \left( \frac{x}{\sqrt{t}} \right), \quad (t,x) \in (0,\infty) \times \bR^n, \]
        for some non-zero divergence-free $U \in C^\infty(\bR^n)$, see \cite[Theorem 2.7]{Germain-Pavlovic-Staffilani2007-Analyticity}. In particular, by scale invariance, one gets that for such self-similar $u$,
        \[ \|u(t)\|_{\bmo^{-1}} = \|U\|_{\bmo^{-1}} \ne 0, \quad t>0. \]
        Therefore, it never vanishes in the strong topology.

        This argument does not conflict with the results by Auscher, Dubois, and Tchamitchian for $\vmo^{-1}$-initial data, cf. \eqref{e:long-VMO-1}, as we notice that there is no non-zero distribution in $\vmo^{-1}$ that is homogeneous of degree $-1$.
    \end{enumerate}
\end{remark}

To finish the introduction, let us mention the problem of uniqueness. To our best knowledge, uniqueness of mild solutions for arbitrary initial data in $\bmo^{-1}$ still remains open. Some partial results require additional smallness (e.g. \cite{Koch-Tataru2001-BMO-1}) or local smallness (e.g. \cite{Dubois2002-these,Miura2005-unique-vmo-1}) of the solution. For instance, Dubois \cite{Dubois2002-these} proves that any mild solution $u \in X_T$ satisfying \eqref{e:local-small-VMO-1} with $u_0 \in \bmo^{-1}$ (in particular for $u_0 \in \vmo^{-1}$) is unique. However, we also notice that a recent paper of Guillod--{\v S}ver\'ak \cite{Guillod-Sverak2023-nonunique} provides numerical evidence suggesting non-uniqueness for large initial data in $\bmo^{-1}$ by symmetry breaking.

\subsection*{Organization}
In Section \ref{sec:spaces}, we introduce the function spaces and their basic properties to be used. Section \ref{sec:outline} outlines the proofs of Theorems \ref{thm:cont-bmo-1} and \ref{thm:long-time}, with two main propositions (cf. Propositions \ref{prop:L0} and \ref{prop:L1}) proved in Sections \ref{sec:L0} and \ref{sec:L1}, respectively.

\subsection*{Notation}
\label{ssec:notation}
Throughout the paper, we say $X \lesssim Y$ (or $X \lesssim_A Y$, resp.) if $X \le CY$ with an irrelevant constant $C$ (or depending on $A$, resp.), and say $X \eqsim Y$ if $X \lesssim Y$ and $Y \lesssim X$. 

Let $n \ge 1$ and write $\bR^{1+n}_+:=(0,\infty) \times \bR^n$. For any function $f(t,x)$ on $(0,\infty) \times \bR^n$, denote by $f(t)$ the function $x \mapsto f(t,x)$. For any (Euclidean) ball $B \subset \bR^n$, write $r(B)$ for the radius of $B$. Let $E$ be a measurable subset of $\bR^n$ with finite positive measure and $f \in L^1(E)$. We write
\[ \fint_E f dx := \frac{1}{|E|} \int_E f dx, \]
where $|E|$ denotes the Lebesgue measure of $E$. 

\subsection*{Acknowledgements}
The author is very grateful to his supervisor Pascal Auscher for his generous help on the proofs and the writing. The author also thanks Patrick G\'erard and Zachary Bradshaw for enlightening discussions.
\section{Function spaces}
\label{sec:spaces}

Let $\bmo^{-1}$ be the collection of distributions $f \in \scrD'(\bR^n)$ with $f=\Div g$ for some $g \in \bmo$. The space $\bmo^{-1}$ is isomorphic to the homogeneous Triebel--Lizorkin space $\DotF^{-1}_{\infty,2}$ and the homogeneous Hardy--Sobolev space $\DotH^{-1,\infty}$. The reader can refer to \cite[\S 5]{Triebel1983Spaces} for definitions and fundamental properties of the family of homogeneous Hardy--Sobolev spaces $\DotH^{s,p}$ with $s \in \bR$ and $0<p\le \infty$, or \cite[\S 2.1]{Auscher-Hou2024-HCL} for a short summary. In particular, let us mention two basic properties to be frequently used.
\begin{itemize}
    \item (Duality) The space $\bmo^{-1}$ identifies with the dual of $\DotH^{1,1}$ via $L^2(\bR^n)$-duality.

    \item (Density) Let $\scrS_\infty$ be the subspace of Schwartz functions $\scrS$ consisting of $\phi \in \scrS$ with $\int_{\bR^n} x^\alpha \phi(x) dx = 0$ for any multi-index $\alpha$. It is dense in $\DotH^{s,p}$ for any $s \in \bR$ and $0<p<\infty$.
\end{itemize}

For $1 \le q < \infty$ and $0<T \le \infty$, define the \emph{Carleson functional} $\cC^{(q)}_T$ on (vector-valued, strongly) measurable functions $u$ on $(0,T) \times \bR^n$ by
\[ \cC^{(q)}_T(u)(x) := \sup_{\substack{B:x \in B \\ r(B)^2<T}} \left( \int_0^{r(B)^2} \fint_B |u(t,y)|^q dtdy \right)^{1/q}, \quad x \in \bR^n, \]
where $B$ describes balls in $\bR^n$. Let $X_T$ be the collection of measurable functions $u$ on $(0,T) \times \bR^n$ for which
\[ \|u\|_{X_T} := \sup_{0<t<T} \|t^{1/2} u(t)\|_{L^\infty} + \|\cC^{(2)}_T(u)\|_{L^\infty} < \infty. \]

To have a better understanding of the norm $\|\cC^{(q)}_T(u)\|_{L^\infty}$, we recall tent spaces introduced by \cite{Coifman-Meyer-Stein1985_TentSpaces}. Here we adapt to the parabolic settings. Let $\beta \in \bR$ and $1 \le q < \infty$. The \emph{tent space $T^{\infty,q}_\beta$} consists of measurable functions $u$ on $\bR^{1+n}_+$ for which
\[ \|u\|_{T^{\infty,q}_\beta} := \sup_B \left( \int_0^{r(B)^2} \fint_B |t^{-\beta} u(t,y)|^q dtdy \right)^{1/q} < \infty. \]
For coherence, we also write
\[ \|u\|_{L^\infty_\beta(\bR^{1+n}_+)} := \sup_{t>0} \|t^{-\beta} u(t)\|_{L^\infty}. \]
Observe that $\|u\|_{T^{\infty,q}_\beta} = \|\cC^{(q)}_\infty(t^{-\beta} u(t))\|_{L^\infty}$. In fact, most of the properties of tent spaces apply \textit{mutatis mutandis} to the norm $\|\cC^{(q)}_T(t^{-\beta} u(t))\|_{L^\infty}$ for $T<\infty$. We shall omit to mention this in the sequel. 

For $\beta \in \bR$ and $1<q<\infty$, $T^{\infty,q}_\beta$ identifies with the dual of $T^{1,q'}_{-\beta}$ via $L^2(\bR^{1+n}_+)$-duality, where the \emph{tent space $T^{1,q'}_{-\beta}$} consists of measurable functions $u$ on $\bR^{1+n}_+$ for which
\[ \|u\|_{T^{1,q'}_{-\beta}} := \int_{\bR^n} \left( \int_0^\infty \fint_{B(x,t^{1/2})} |t^{\beta} u(t,y)|^{q'} dtdy \right)^{1/q'} dx < \infty. \]
For $q=1$, the duality is more subtle (see \cite[Proposition 1]{Coifman-Meyer-Stein1985_TentSpaces}) but for any measurable functions $u,v$ on $\bR^{1+n}_+$, it still holds that
\begin{equation}
    \label{e:dual-T^infty,1}
    \int_0^\infty \int_{\bR^n} |u(t,y)| |v(t,y)| dtdy \lesssim \|u\|_{T^{\infty,1}_\beta} \|v\|_{T^{1,\infty}_{-\beta}},
\end{equation}
where the $T^{1,\infty}_{-\beta}$-norm is given by
\[ \|v\|_{T^{1,\infty}_{-\beta}} := \int_{\bR^n} \left( \esssup{\substack{t>0, |y-x|<t^{1/2}}} |t^\beta v(t,y)| \right) dx. \]

A well-known heat characterization of $\bmo^{-1}$ provides the link between $\bmo^{-1}$ and the tent space $T^{\infty,2}$ as
\[ \|u_0\|_{\bmo^{-1}} \eqsim \|e^{t\Delta} u_0\|_{T^{\infty,2}}. \]
Here and in the sequel, we omit the script $\beta$ for tent spaces if $\beta=0$.
\section{Outline of the proofs}
\label{sec:outline}

In this section, we outline the proofs of Theorems \ref{thm:cont-bmo-1} and \ref{thm:long-time}. It is enough to prove the case $T=\infty$. The same argument also works when $T$ is finite. Let $u \in X_\infty$ be a (global) mild solution to \eqref{e:NS} with initial data $u_0 \in \bmo^{-1}$. H\"older's inequality yields
\[ u \otimes u \in L^\infty_{-1}(\bR^{1+n}_+) \cap T^{\infty,2}_{-1/2} \cap T^{\infty,1}. \]
Define the linear operator
\[ \cL(f)(t) := \int_0^t e^{(t-s)\Delta} \bP \Div f(s) ds, \quad t>0. \]
Theorems \ref{thm:cont-bmo-1} and \ref{thm:long-time} follow from the following main proposition.
\begin{prop}
    \label{prop:L}
    Let $f \in L^\infty_{-1}(\bR^{1+n}_+) \cap T^{\infty,2}_{-1/2} \cap T^{\infty,1}$. Then $\cL(f)$ belongs to $C_0([0,\infty);\bmo^{-1})$ with $\cL(f)(0)=0$.
\end{prop}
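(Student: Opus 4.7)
The plan is to exploit the duality $\bmo^{-1}=(\DotH^{1,1})^{\ast}$, testing $\cL(f)(t)$ against $\phi$ in the dense subspace $\scrS_\infty\subset\DotH^{1,1}$ and reading membership, weak*-continuity, and vanishing at $0$ and $\infty$ as statements about a single dual integral. After approximating $f$ by smooth compactly supported tensors, and using $\bP^{\ast}=\bP$ together with $\Div^{\ast}=-\nabla$, the pairing becomes
\[
\langle \cL(f)(t),\phi\rangle \;=\; -\int_0^t\!\int_{\bR^n} f(s,y):\nabla\bigl(e^{(t-s)\Delta}\bP\phi\bigr)(y)\,dy\,ds,
\]
so all derivatives land on $\phi$ and the analysis reduces to tent-type estimates on the kernel $(s,y)\mapsto\nabla e^{(t-s)\Delta}\bP\phi(y)$.

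The first main step, which I would isolate as Proposition~\ref{prop:L0}, is the uniform bound $\sup_{t\ge 0}\|\cL(f)(t)\|_{\bmo^{-1}}\lesssim\|f\|_{L^\infty_{-1}}+\|f\|_{T^{\infty,2}_{-1/2}}+\|f\|_{T^{\infty,1}}$. I would distribute the double integral across the three norms on $f$: the $T^{\infty,1}$ piece is paired via the borderline inequality \eqref{e:dual-T^infty,1} against $\nabla e^{(t-s)\Delta}\bP\phi$ measured in $T^{1,\infty}$; the $T^{\infty,2}_{-1/2}$ piece is paired by the reflexive tent duality with the same function in $T^{1,2}_{1/2}$; and the $L^\infty_{-1}$ piece is paired in the weighted $L^1(s\,dy\,ds)$. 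Each of these test-function norms is controlled by $\|\phi\|_{\DotH^{1,1}}$ through the standard heat/area-function characterizations of homogeneous Hardy--Sobolev spaces, closing the estimate.

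Weak*-continuity in $t$ then follows by a split-and-density argument. For $\phi\in\scrS_\infty$ and $t_0\in[0,\infty)$, write $\langle\cL(f)(t)-\cL(f)(t_0),\phi\rangle$ as a kernel-difference integral on the common interval $[0,\min(t,t_0)]$, which tends to $0$ by dominated convergence against the integrable majorants produced in the preceding step, plus a short-interval integral on $[\min(t,t_0),\max(t,t_0)]$ that vanishes as $t\to t_0$ by absolute continuity of the tent integrals in time. The case $t_0=0$ is handled identically, giving both $\cL(f)(0)=0$ and right-continuity at $0$. Density of $\scrS_\infty$ in $\DotH^{1,1}$ combined with the uniform bound of Step~1 then propagates weak*-continuity to every test function.

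Vanishing at infinity, which I expect to be Proposition~\ref{prop:L1}, is the subtle part and the main obstacle. By density and the uniform bound one again reduces to $\phi\in\scrS_\infty$. Given $\varepsilon>0$ and a parameter $M$ to be chosen large, I would split the $s$-integral at $s=t-M$. On $[0,t-M]$ the kernel $\nabla e^{(t-s)\Delta}\bP\phi$ sees heat times $\ge M$, and for $\phi\in\scrS_\infty$ its restriction to $\{\sigma\ge M\}$ has $T^{1,\infty}$, $T^{1,2}_{1/2}$ and weighted $L^1$ norms going to $0$ as $M\to\infty$, thanks to the vanishing moments of $\phi$ and the resulting fast decay of $e^{\sigma\Delta}\phi$ at large times; this bounds the $[0,t-M]$-contribution by $\varepsilon$ times the three $f$-norms, uniformly in $t$. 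On the complementary short window $[t-M,t]$, the pointwise bound $\|f(s)\|_{L^\infty}\lesssim s^{-1}$ coming from $L^\infty_{-1}$ combined with $\|\nabla e^{\sigma\Delta}\bP\phi\|_{L^1}\lesssim\sigma^{-1/2}\|\phi\|_{L^1}$ yields a bound of order $M^{1/2}/(t-M)$, tending to $0$ as $t\to\infty$ for $M$ fixed. The delicate aspect is precisely here: time-truncated tensors are \emph{not} dense in the intersection $L^\infty_{-1}\cap T^{\infty,2}_{-1/2}\cap T^{\infty,1}$, so one cannot simply approximate $f$ by compactly supported data and appeal to the weak* decay of the heat semigroup; the argument must be genuinely two-scale in $M$ and $t$, and the non-reflexive pairing \eqref{e:dual-T^infty,1} forces particularly careful control of the large-$\sigma$ tail of $\nabla e^{\sigma\Delta}\bP\phi$ on the test-function side.
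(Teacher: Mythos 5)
Your overall framework (duality with $\DotH^{1,1}$, testing against $\scrS_\infty$, dominated convergence plus the uniform bound and density) matches the paper's, but the core analytic step is missing, and it is precisely the step the paper's decomposition is designed to supply. You propose to bound the pairing by putting the kernel $(s,y)\mapsto\nabla e^{(t-s)\Delta}\bP\varphi(y)$ into $T^{1,\infty}$, $T^{1,2}_{1/2}$, and a weighted $L^1$, each controlled by $\|\varphi\|_{\DotH^{1,1}}$ ``through the standard heat/area-function characterizations.'' Those characterizations control $\int_{\bR^n}\sup_{\sigma>0,\,|y-x|<\sigma^{1/2}}|e^{\sigma\Delta}b(y)|\,dx$, i.e.\ they require the heat time to \emph{match} the Whitney aperture. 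In your pairing the heat time is $t-s$ while the tent-space geometry lives at scale $s^{1/2}$; near $s=t$ these are wildly mismatched ($t-s\to0$ while $s^{1/2}\approx t^{1/2}$), and no uniform-in-$t$ bound by $\|\varphi\|_{\DotH^{1,1}}$ follows from the standard theory. (Relatedly, ``distributing the double integral across the three norms on $f$'' is not a defined operation: $f$ is a single function and you must actually decompose the operator or the time domain before different norms can be used in different regimes.) This is a genuine gap, not a detail: it is the known difficulty in passing from $\vmo^{-1}$ to $\bmo^{-1}$ data.

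The paper resolves it by an operator-level splitting (following Auscher--Frey): writing $\bI=(\bI-e^{2s\Delta})+e^{2s\Delta}$ inside $e^{(t-s)\Delta}\bP\Div$ gives $\cL=\cL_0\circ\cR+\cL_1$, where $\cL_1(f)(t)=\int_0^t e^{(t+s)\Delta}\bP\Div f(s)\,ds$ has heat time $t+s\ge s$, so the Fefferman--Stein nontangential maximal bound $\|e^{s\Delta}b\|_{T^{1,\infty}}\lesssim\|b\|_{\DotH^{0,1}}$ with $b=\bP\nabla\varphi$ applies and only the $T^{\infty,1}$ norm of $f$ is used; and $\cL_0(g)(t)=\int_0^t\Delta e^{(t-s)\Delta}g(s)\,ds$ is a maximal-regularity operator applied to $g=\cR(s^{1/2}f)\in L^\infty_{-1/2}(\bR^{1+n}_+)\cap T^{\infty,2}$, where the singular region $s\sim t$ is handled not by a direct tent-duality bound but by a dyadic iteration in time (Lemma \ref{lemma:f-layer}), each layer contributing a factor $2^{-k/2}$ against the functional $\cA_{2^{-k+1}t}(\varphi)\lesssim\|\varphi\|_{\DotH^{1,1}}$. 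Your guess that Propositions \ref{prop:L0} and \ref{prop:L1} separate the uniform bound from the long-time limit is also off: they separate the two pieces $\cL_0$ and $\cL_1$ of the operator, each proved to map into $C_0([0,\infty);\bmo^{-1})$ in full. Your two-scale $s=t-M$ splitting for the long-time limit is a reasonable sketch in spirit (the paper instead sends $\cA_t(\varphi)\to0$ and $\|e^{t\Delta}b\|_{\DotH^{0,1}}\to0$ as $t\to\infty$), but it cannot be executed until the uniform pairing estimate is actually established, which requires a decomposition of the kind above.
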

Here, $C_0([0,\infty);\bmo^{-1})$ is the space of continuous functions valued in $\bmo^{-1}$ (endowed with the weak*-topology), with limit 0 as $t \to \infty$.

Admitting Proposition \ref{prop:L}, let us prove Theorems \ref{thm:cont-bmo-1} and \ref{thm:long-time}.

\begin{proof}[Proof of Theorems \ref{thm:cont-bmo-1} and \ref{thm:long-time}, admitting Proposition \ref{prop:L}]
    Note that the heat semigroup $(e^{t\Delta})$ is bounded and weak*-continuous on $\bmo^{-1}$, one can find the function $t \mapsto e^{t\Delta} u_0$ lies in $C_0([0,\infty);\bmo^{-1})$ with
    \begin{equation}
        \label{e:heat-u_0-bmo-1}
        \sup_{t \ge 0} \|e^{t\Delta} u_0\|_{\bmo^{-1}} \eqsim \|u_0\|_{\bmo^{-1}}.
    \end{equation}
    Moreover, Proposition \ref{prop:L} yields $B(u,u) = \cL(u \otimes u)$ also belongs to $C_0([0,\infty);\bmo^{-1})$ with $B(u,u)(0)=0$. We hence obtain that $u$ lies in $C_0([0,\infty);\bmo^{-1})$ with $u(0)=u_0$.
    
    To show the bound \eqref{e:bd-u-C(bmo-1)}, we infer from \cite[Lemma 8]{Auscher-Dubois-Tchamitchian2004-BMO-1} that
    \begin{equation}
        \label{e:B(u,u)-Linfty-BMO-1}
        \sup_{t>0} \|B(u,u)(t)\|_{\bmo^{-1}} \lesssim \|u\|_{X_\infty}^2.
    \end{equation}
    In fact, this inequality extends to $t \ge 0$ due to the fact that $B(u,u)(t)$ tends to 0 in $\bmo^{-1}$ as $t \to 0+$. Combining this with \eqref{e:heat-u_0-bmo-1} gives the bound \eqref{e:bd-u-C(bmo-1)}. This completes the proof.
\end{proof}

Now we concentrate on the proof of Proposition \ref{prop:L}. As we shall see, our proof also implies the bound \eqref{e:B(u,u)-Linfty-BMO-1}, see Remark \ref{rem:B(u,u)-Linfty-BMO-1}.

\begin{proof}[Proof of Proposition \ref{prop:L}]
    We follow \cite{Auscher-Frey2017-NS} to decompose $\cL$ as
    \begin{align*}
        \cL(f)(t)
        &= \int_0^t \Delta e^{(t-s)\Delta} (s\Delta)^{-1} (\bI-e^{2s\Delta}) s^{1/2} \bP \Div s^{1/2} f(s) ds \\
        &\quad + \int_0^t e^{(t+s)\Delta} \bP \Div f(s) ds.
    \end{align*}

    As $f \in L^\infty_{-1}(\bR^{1+n}_+) \cap T^{\infty,2}_{-1/2}$, we get $s^{1/2}f(s) \in L^\infty_{-1/2}(\bR^{1+n}_+) \cap T^{\infty,2}$. Define the operator
    \[ \cR(F)(s) := (s\Delta)^{-1} (\bI-e^{2s\Delta}) s^{1/2} \bP \Div F(s), \quad s>0. \]
    \begin{lemma}
        \label{lemma:R}
        $\cR$ is bounded on both $L^\infty_{-1/2}(\bR^{1+n}_+)$ and $T^{\infty,2}$.
    \end{lemma}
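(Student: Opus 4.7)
The plan is to recognize $\cR$ as a spatial convolution operator at each fixed $s$ and then derive the two boundedness claims from Oseen-type kernel estimates. The elementary identity
\[ (s\Delta)^{-1}(\bI - e^{2s\Delta}) = -\int_0^2 e^{s\sigma \Delta}\,d\sigma, \]
(verified at the Fourier-symbol level by the substitution $\tau = s\sigma|\xi|^2$) allows one to rewrite
\[ \cR(F)(s) = -\int_0^2 s^{1/2} e^{s\sigma\Delta}\bP\Div F(s)\,d\sigma = K_s \ast F(s), \]
where, by parabolic scale invariance, the kernel must take the self-similar form $K_s(z) = s^{-n/2} k(z/\sqrt s)$ for a single kernel $k$ on $\bR^n$. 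The properties of $k$ needed are (i) $k \in L^1(\bR^n)$ with $\|K_s\|_{L^1}=\|k\|_{L^1}$ uniformly in $s$, and (ii) the far-field decay $|k(w)| \lesssim (1+|w|)^{-(n+1)}$. Both follow from the classical Oseen-tensor pointwise bound on the kernel of $e^{t\Delta}\bP\Div$ by $(t^{1/2}+|z|)^{-(n+1)}$, integrated against $d\sigma$ on $(0,2)$; the resulting near-origin singularity of $k$ is only of order $|w|^{1-n}$ and hence locally integrable in $\bR^n$.

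The $L^\infty_{-1/2}$ bound is immediate: Young's inequality applied pointwise in $s$ gives $\|\cR(F)(s)\|_{L^\infty} \le \|k\|_{L^1}\|F(s)\|_{L^\infty}$, which directly yields $\|\cR(F)\|_{L^\infty_{-1/2}} \lesssim \|F\|_{L^\infty_{-1/2}}$. The $T^{\infty,2}$ bound is the more substantial part. Fix a ball $B$ of radius $r$ centered at $x_B$ and split $F(s) = F(s)\chi_{4B} + F(s)\chi_{(4B)^c}$. The local piece is controlled by the $L^2$-Young estimate combined with the tent-space bound on the comparable ball $4B$, noting $|4B| \sim |B|$ and $r(4B)^2 \ge r^2$. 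For the far piece, decompose $(4B)^c = \bigsqcup_{k \ge 2} A_k$ with $A_k = 2^{k+1}B \setminus 2^k B$. Since $y \in B$, $z \in A_k$, and $s \le r^2$ together force $|y-z|/\sqrt s \gtrsim 2^k$, the far-field bound gives $|K_s(y-z)| \lesssim s^{1/2}(2^k r)^{-(n+1)}$. Two applications of Cauchy-Schwarz (one in the spatial variable $z$, one in the dyadic index $k$), followed by Fubini and the tent-space control on each dilate $2^{k+1}B$, reduce the estimate to a geometric series of the form $\sum_{k \ge 2} 2^{-k(2-\epsilon)}$, convergent for small $\epsilon>0$, and ultimately to $\lesssim \|F\|_{T^{\infty,2}}^2$.

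The main obstacle is the kernel analysis. The $L^1$ bound on $k$ is forgiving (the near-origin singularity is integrable in every dimension), but one genuinely needs the parabolic gain $|K_s(y-z)| \lesssim s^{1/2}/|y-z|^{n+1}$ in the far field, since it is precisely this $s^{1/2}$ that produces the convergence of the dyadic sum once balanced against the tent-space measure of $2^{k+1}B$. Extracting this gain cleanly from the Oseen pointwise estimate, and keeping track of the correct exponents across the $\sigma$-integration, is the only delicate point; beyond it the argument is a routine combination of Young's inequality in the local region and dyadic annular decomposition in the far region.
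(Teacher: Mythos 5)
Your argument is correct and reaches the same two kernel bounds as the paper, namely $|K_s(z)|\lesssim s^{-n/2}(|z|/\sqrt{s})^{1-n}$ near the diagonal and $|K_s(z)|\lesssim s^{1/2}|z|^{-(n+1)}$ for $|z|\gtrsim \sqrt{s}$, but it gets there and exploits them differently. For the kernel, you use the explicit identity $(s\Delta)^{-1}(\bI-e^{2s\Delta})=-\int_0^2 e^{s\sigma\Delta}\,d\sigma$ together with the Oseen-tensor estimate for $e^{t\Delta}\bP\Div$; the paper instead quotes the singular-integral realization of pseudo-differential operators to assert the same pointwise bound. Your route is more self-contained and makes the $s^{1/2}$ far-field gain transparent (the $\sigma$-integration is where the exponents are checked, and your bookkeeping there is consistent). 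For the $T^{\infty,2}$ bound, you prove it directly: Young's inequality for the piece supported on $4B$, and a dyadic annular decomposition with the $s^{1/2}|z|^{-(n+1)}$ decay balanced against $|2^{k+1}B|\,\|F\|_{T^{\infty,2}}^2$, yielding the convergent series $\sum_k 2^{-k(2-\epsilon)}$. The paper instead verifies $L^2\to L^\infty$ off-diagonal estimates for $\cR_s$ (by interpolating the $L^1\to L^\infty$ kernel bound with the uniform $L^\infty$ bound) and then invokes \cite[Lemma 3.1]{Auscher-Frey2017-NS}; that cited lemma is itself proved by an annular decomposition of the same flavor, so the two arguments are morally equivalent. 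What your version buys is independence from the external lemma and from the uniform $L^2$ bound (which the paper needs only to set up the interpolation); what the paper's version buys is brevity and reusability of the off-diagonal framework. No gap.
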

    We defer the proof to the end of this section. Now we have
    \[ g(s) := (s\Delta)^{-1} (\bI-e^{2s\Delta}) s^{1/2} \bP \Div s^{1/2} f(s) \]
    belongs to $L^\infty_{-1/2}(\bR^{1+n}_+) \cap T^{\infty,2}$. Define the \emph{maximal regularity operator}
    \[ \cL_0(g)(t) := \int_0^t \Delta e^{(t-s)\Delta} g(s) ds, \quad t>0, \]
    and the remainder
    \[ \cL_1(f)(t) := \int_0^t e^{(t+s)\Delta} \bP \Div f(s) ds, \quad t>0. \]
    Proposition \ref{prop:L} directly follows from the following two propositions.
    \begin{prop}
        \label{prop:L0}
        Let $g \in L^\infty_{-1/2}(\bR^{1+n}_+) \cap T^{\infty,2}$. Then $\cL_0(g)$ lies in $C_0([0,\infty);\bmo^{-1})$ with $\cL_0(g)(0)=0$.
    \end{prop}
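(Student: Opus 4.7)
The plan is to combine a uniform $\bmo^{-1}$-bound on $\cL_0(g)(t)$ with pointwise continuity against a dense family of test functions in $\DotH^{1,1}$, and then upgrade to weak*-continuity and weak*-limits via an $\epsilon/3$-argument. The two structural tools are the duality $\bmo^{-1} = (\DotH^{1,1})^*$ and the density of $\scrS_\infty$ in $\DotH^{1,1}$.

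For the uniform bound, I would test $\cL_0(g)(t)$ against $\phi \in \scrS_\infty$, use Fubini and self-adjointness of the heat semigroup to write
\[
\langle \cL_0(g)(t), \phi \rangle = \int_0^t \int_{\bR^n} g(s,y)\, \Delta e^{(t-s)\Delta} \phi(y)\, dy\, ds,
\]
and apply the $T^{\infty,2}$-$T^{1,2}$ pairing. The remaining task is the uniform-in-$t$ estimate
\[
\|\mathbf{1}_{(0,t)}(s)\, \Delta e^{(t-s)\Delta} \phi\|_{T^{1,2}} \lesssim \|\phi\|_{\DotH^{1,1}},
\]
which I expect to follow from the Lusin (conical) heat characterization of Triebel--Lizorkin spaces (cf.\ \cite[\S 5]{Triebel1983Spaces}); this is essentially the maximal-regularity estimate underlying \cite[Lemma 8]{Auscher-Dubois-Tchamitchian2004-BMO-1}. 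An alternative route goes through $\DotH^{1,1} \simeq I^{-1}(H^1)$ and an atomic decomposition of $\nabla \phi \in H^1$.

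For the pointwise limits and continuity I would still test against $\phi \in \scrS_\infty$, exploiting that all moments of $\phi$ vanish, so elementary Fourier analysis gives $\|\Delta e^{r\Delta} \phi\|_{L^1} \lesssim_{\phi, M} (1+r)^{-M}$ for every $M \ge 0$. Combined with the pointwise bound $|g(s,\cdot)| \le s^{-1/2} \|g\|_{L^\infty_{-1/2}}$ coming from the extra hypothesis $g \in L^\infty_{-1/2}$, dominated convergence yields continuity of $t \mapsto \langle \cL_0(g)(t), \phi\rangle$ on $(0,\infty)$. The limit at $0$ follows from $|\langle \cL_0(g)(t),\phi\rangle| \lesssim_\phi \int_0^t s^{-1/2} ds = O(t^{1/2})$. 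For $t \to \infty$, I would split the $s$-integral at $t/2$: on $(0, t/2)$, heat-kernel decay gives $\|\Delta e^{(t-s)\Delta}\phi\|_{L^1} \lesssim_{\phi,M} t^{-M}$ for any $M$; on $(t/2, t)$ the factor $|g(s,\cdot)| \lesssim t^{-1/2}$ is small while $\int_0^{t/2} \|\Delta e^{r\Delta}\phi\|_{L^1} dr$ is uniformly finite.

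The $\epsilon/3$-upgrade is then standard: for $\phi \in \DotH^{1,1}$, pick $\phi_k \in \scrS_\infty$ with $\phi_k \to \phi$ in $\DotH^{1,1}$, and observe that the uniform bound controls $|\langle \cL_0(g)(t) - \cL_0(g)(t_0), \phi - \phi_k\rangle|$ independently of $t, t_0$, while the previous step handles each $\phi_k$. The main obstacle is the uniform bound itself: after the substitution $\tau = t-s$, the tent balls $B(x, s^{1/2})$ become $B(x, (t-\tau)^{1/2})$, which are not the canonical parabolic balls $B(x, \tau^{1/2})$ entering the standard heat characterization of $\DotH^{1,1}$. One must either exploit $B(x, (t-\tau)^{1/2}) \subset B(x, t^{1/2})$ together with a Carleson-type absorption, or work directly with an atomic decomposition of $\nabla \phi \in H^1$. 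Once this $T^{1,2}$-estimate is secured, the remaining steps are routine given the additional $L^\infty_{-1/2}$-regularity of $g$.
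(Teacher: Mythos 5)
Your overall skeleton (uniform $\bmo^{-1}$-bound plus pointwise limits against the dense class $\scrS_\infty$, then an $\epsilon/3$-upgrade via duality) is the same as the paper's, and your treatment of the dense-subspace part is fine --- indeed somewhat more elementary than the paper's, since for fixed $\varphi\in\scrS_\infty$ the crude bound $|g(s,\cdot)|\le s^{-1/2}\|g\|_{L^\infty_{-1/2}}$ together with $\|\Delta e^{r\Delta}\varphi\|_{L^1}\lesssim_{\varphi,M}(1+r)^{-M}$ does give continuity on $(0,\infty)$, the $O(t^{1/2})$ vanishing at $0$, and the decay at infinity. The problem is the load-bearing step, which you yourself flag as ``the main obstacle'' and never close: the claimed uniform-in-$t$ estimate $\|\mathbf{1}_{(0,t)}(s)\,\Delta e^{(t-s)\Delta}\varphi\|_{T^{1,2}}\lesssim\|\varphi\|_{\DotH^{1,1}}$. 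As stated, this would give $\sup_t\|\cL_0(g)(t)\|_{\bmo^{-1}}\lesssim\|g\|_{T^{\infty,2}}$ using \emph{only} $g\in T^{\infty,2}$, i.e., it would make the hypothesis $g\in L^\infty_{-1/2}(\bR^{1+n}_+)$ superfluous for the uniform bound. The paper does not prove any such estimate, and its proof uses the $L^\infty_{-1/2}$ hypothesis essentially, which is strong evidence that your estimate is not available in this form.

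The difficulty is exactly the time-scale mismatch you mention: the obstruction is the near-diagonal region $s\in(t/2,t)$, where the semigroup time $t-s$ is much smaller than the tent aperture $s^{1/2}\eqsim t^{1/2}$, so $\Delta e^{(t-s)\Delta}\varphi=\Div e^{(t-s)\Delta}\nabla\varphi$ is only as regular as $\Delta\varphi$ itself, which for general $\varphi\in\DotH^{1,1}$ is not locally square integrable; neither the inclusion $B(x,(t-\tau)^{1/2})\subset B(x,t^{1/2})$ nor an atomic decomposition of $\nabla\varphi$ repairs this by itself. The paper's solution is to prove the $T^{1,2}$-type bound only on the complementary region $s<t/2$ (where $t-s>t/2\ge s$, so Jensen and the semigroup property reduce everything to a conical square function of $\nabla\varphi\in H^1$; this is Lemma \ref{lemma:A_t} via Lemma \ref{lemma:G*psi*a}), and to handle $s\in(t/2,t)$ by a dyadic-in-time iteration: shifting $g$ by $(1-2^{-(k-1)})t$ and invoking $g\in L^\infty_{-1/2}$ through Lemma \ref{lemma:f-layer} makes the Carleson norm of the shifted function of size $2^{-k/2}\|g\|_{L^\infty_{-1/2}}$, and the resulting geometric series converges. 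Without this (or some genuine substitute for the near-diagonal region), your uniform bound --- and hence the $\epsilon/3$-upgrade from $\scrS_\infty$ to all of $\DotH^{1,1}$ --- is unproven, so the proposal has a genuine gap.
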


    \begin{prop}
        \label{prop:L1}
        Let $f \in T^{\infty,1}$. Then $\cL_1(f)$ lies in $C_0([0,\infty);\bmo^{-1})$ with $\cL_1(f)(0)=0$.
    \end{prop}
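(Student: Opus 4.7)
The plan is to combine the duality between $\bmo^{-1}$ and $\DotH^{1,1}$ with the tent-space duality \eqref{e:dual-T^infty,1}. Fix $\varphi\in\scrS_\infty$, which is dense in $\DotH^{1,1}$. Since $\bP$ is self-adjoint and $\Div^*=-\nabla$, integration by parts yields
\[ \langle\cL_1(f)(t),\varphi\rangle = -\int_0^t\int_{\bR^n}f(s,y)\cdot\nabla e^{(t+s)\Delta}\bP\varphi(y)\,dy\,ds. \]
To establish the uniform bound $\|\cL_1(f)(t)\|_{\bmo^{-1}}\lesssim\|f\|_{T^{\infty,1}}$, I would apply \eqref{e:dual-T^infty,1} with $\beta=0$ and estimate the $T^{1,\infty}$-norm of $\Phi_t(s,y):=\nabla e^{(t+s)\Delta}\bP\varphi(y)\mathbf{1}_{(0,t)}(s)$. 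Setting $r=t+s$, the cone constraint $|y-x|<s^{1/2}=(r-t)^{1/2}$ is weaker than $|y-x|<r^{1/2}$, hence $\|\Phi_t\|_{T^{1,\infty}}\le\|N^*(\nabla\bP\varphi)\|_{L^1}$, where $N^*g(x):=\sup_{\tau>0,\,|y-x|<\tau^{1/2}}|e^{\tau\Delta}g(y)|$ is the non-tangential heat maximal function. The standard characterization $\|N^*g\|_{L^1}\lesssim\|g\|_{H^1}$, together with the $H^1$-boundedness of $\bP$ and $\|\nabla\varphi\|_{H^1}\eqsim\|\varphi\|_{\DotH^{1,1}}$, gives the desired uniform bound.

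Given the uniform bound and density of $\scrS_\infty$, it then suffices to verify $\langle\cL_1(f)(t),\varphi\rangle\to\langle\cL_1(f)(t_0),\varphi\rangle$ for $\varphi\in\scrS_\infty$ at each $t_0\in[0,\infty)$. For $t_0\in(0,\infty)$ and $t$ in a compact neighborhood of $t_0$, the sum $t+s$ stays in a compact subset of $(0,\infty)$ uniformly in $s\in(0,2t_0)$; since $\nabla\bP\varphi\in\scrS$, this yields $|\nabla e^{(t+s)\Delta}\bP\varphi(y)|\le C_{\varphi,t_0}\langle y\rangle^{-N}$ for any $N$. The tent-space duality applied with the $s$-independent dominant $\mathbf{1}_{(0,2t_0)}(s)\langle y\rangle^{-N}$ (whose $T^{1,\infty}$-norm is finite for $N>n$) makes this dominant integrable against $|f|$, so dominated convergence yields the continuity. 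The same estimate with $t_0=0$ (restricting $t\in(0,1]$) gives $\cL_1(f)(0)=0$, since the domain $\{0<s<t\}$ shrinks to $\emptyset$.

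For the limit $t\to\infty$, I would exploit that $\varphi\in\scrS_\infty$ forces $h:=\nabla\bP\varphi$ to have infinite vanishing moments. Combining Taylor expansion of the heat kernel with the Schwartz decay of $h$ yields the rapid decay
\[ |e^{\tau\Delta}h(y)|\lesssim_L (1+|y|+\tau^{1/2})^{-L}\quad\text{for every } L>0. \]
Plugging this pointwise bound into $\Phi_t$ and integrating over the non-tangential cone gives $\|\Phi_t\|_{T^{1,\infty}}\lesssim_{L,\varphi}t^{(n-L)/2}$; taking $L>n$ produces polynomial decay of $\langle\cL_1(f)(t),\varphi\rangle$ via the duality from the first paragraph. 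I expect the main technical obstacle to be the clean $T^{1,\infty}$-estimate via $N^*$ and Hardy-space theory in the uniform bound; once that is established, the continuity and the two limits reduce to dominated convergence combined with Schwartz-type decay estimates.
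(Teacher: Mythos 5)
Your proposal is correct in substance and shares the paper's core mechanism: pairing against $\varphi\in\scrS_\infty$, the $T^{\infty,1}$--$T^{1,\infty}$ duality \eqref{e:dual-T^infty,1}, and the Fefferman--Stein non-tangential maximal characterization of $H^1(\bR^n)\simeq\DotH^{0,1}$ for the uniform bound \eqref{e:L1-Linfty-BMO-1}; your observation that the shifted cone $\{|y-x|<s^{1/2},\,0<s<t\}$ sits inside the full parabolic cone in the variable $r=t+s$ recovers the same estimate the paper gets by factoring $e^{(t+s)\Delta}=e^{s\Delta}e^{t\Delta}$ and invoking \eqref{e:heat-H0,1}. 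Where you genuinely diverge is in the three limits: the paper argues softly, using strong continuity of $(e^{t\Delta})$ on $\DotH^{0,1}$ and the fact that $e^{t\Delta}b\to0$ in $\DotH^{0,1}$ applied to $b=\bP\nabla\varphi$, whereas you use explicit pointwise decay of heat extensions plus dominated convergence, which is more quantitative (it gives a polynomial rate as $t\to\infty$) but leans on two facts you should make explicit. First, your continuity and long-time arguments need $\nabla\bP\varphi$ to be a Schwartz function, and for the decay $(1+|y|+\tau^{1/2})^{-L}$ you need all of its moments to vanish; since the Leray symbol is singular at the origin, $\bP$ does not preserve $\scrS$, only $\scrS_\infty$, so this is true but is precisely where $\varphi\in\scrS_\infty$ enters and deserves justification (the paper sidesteps this by only ever using $b=\bP\nabla\varphi\in\DotH^{0,1}$ through $H^1$-boundedness of $\bP$). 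Second, at $t_0=0$ your appeal to ``$t+s$ in a compact subset of $(0,\infty)$'' does not literally apply, since $t+s$ ranges over an interval accumulating at $0$; the uniform bound $\sup_{0<\tau\le2}|e^{\tau\Delta}h(y)|\lesssim_N(1+|y|)^{-N}$ for Schwartz $h$ is still valid (rapid decay of the vertical heat maximal function of a Schwartz function), but it needs this separate elementary estimate rather than compactness of the parameter set. With these two points supplied, your proof is complete and is a legitimate, slightly more computational alternative to the paper's semigroup-continuity argument.
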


    The proofs are provided in Sections \ref{sec:L0} and \ref{sec:L1}, respectively.
\end{proof}

To end this section, let us prove Lemma \ref{lemma:R}.

\begin{proof}[Proof of Lemma \ref{lemma:R}]
    For $s>0$, define the operator $\cR_s$ on $L^2(\bR^n)$ as
    \[ \cR_s(h) := (s\Delta)^{-1} (\bI-e^{2s\Delta}) s^{1/2} \bP \Div h. \]
    Using Fourier transform, one gets that the family $(\cR_s)_{s>0}$ is uniformly bounded on $L^2(\bR^n)$. Denote by $K_s(x)$ the convolution kernel of $\cR_s$. We infer from singular integral realization of pseudo-differential operators (see e.g. \cite[\S VI.4]{Stein1993-HA}) that the kernel satisfies the pointwise bound
    \begin{equation}
        \label{e:K_s(x)}
        |K_s(x)| \lesssim s^{-n/2} \min\{ (s^{-1/2} |x|)^{-n+1}, (s^{-1/2} |x|)^{-n-1} \}
    \end{equation}
    for any $s>0$ and $x \in \bR^n \setminus \{0\}$. In particular, it lies in $L^1(\bR^n)$ with a bound independent of $s$. Thus, $(\cR_s)$ is uniformly bounded on $L^\infty(\bR^n)$, and we hence get boundedness of $\cR$ on $L^\infty_{-1/2}(\bR^{1+n}_+)$ as
    \begin{align*}
        \|\cR(F)\|_{L^\infty_{-1/2}(\bR^{1+n}_+)}
        &= \sup_{s>0} \|s^{1/2} \cR_s(F(s))\|_{L^\infty} \\
        &\lesssim \sup_{s>0} \|s^{1/2} F(s)\|_{L^\infty} = \|F\|_{L^\infty_{-1/2}(\bR^{1+n}_+)}.
    \end{align*}
    
    Next, boundedness of $\cR$ on $T^{\infty,2}$ follows from \cite[Lemma 3.1]{Auscher-Frey2017-NS} with slight modifications of the proof, once we show that for any $s>0$, $E,F \subset \bR^n$ as Borel sets with $\dist(E,F) \ge s^{1/2}$, and $h \in L^2$,
    \begin{equation}
        \label{e:L2-Linfty-ode-Rs}
        \|\I_F \cR_s (\I_E h)\|_{L^\infty} \lesssim s^{-\frac{n}{4}} \left(s^{-1} \dist(E,F)^2\right)^{-\frac{1}{4}(n+1)} \|\I_E h\|_{L^2}.
    \end{equation}
    
    To verify \eqref{e:L2-Linfty-ode-Rs}, we notice that the kernel bound \eqref{e:K_s(x)} implies 
    \[ \|\I_F \cR_s (\I_E h)\|_{L^\infty} \lesssim s^{-\frac{n}{2}} \left(s^{-1} \dist(E,F)^2\right)^{-\frac{1}{2}(n+1)} \|\I_E h\|_{L^1}. \]
    Since $(\cR_s)$ is uniformly bounded on $L^\infty$, we get \eqref{e:L2-Linfty-ode-Rs} by interpolation. This completes the proof.
\end{proof}

In a nutshell, it only remains to prove Propositions \ref{prop:L0} and \ref{prop:L1}. The proofs are presented in the following two sections.

\section{Proof of Proposition \ref{prop:L0}}
\label{sec:L0}

This section is concerned with the proof of Proposition \ref{prop:L0}. We first treat the continuity at $t=0$ and then at $t>0$. Finally we show the limit as $t \to \infty$. Recall that $\bmo^{-1}$ is equipped with the weak*-topology with respect to $\DotH^{1,1}$.

\subsection{Continuity at \texorpdfstring{$t=0$}{t=0}}
\label{ssec:L0-t=0}
To prove $\cL_0(g)(t)$ tends to 0 in $\bmo^{-1}$ as $t \to 0+$, we argue by duality. Pick $\varphi \in \scrS_\infty$. Fubini's theorem yields that for any $t>0$,
\[ \langle \cL_0(g)(t),\varphi \rangle_{L^2(\bR^n)} = \int_0^t \int_{\bR^n} g(s,y) e^{(t-s)\Delta} \Delta \varphi(y) dsdy. \]
It suffices to verify that
\begin{equation}
    \label{e:Phi->0}
    \Phi := \int_0^t \int_{\bR^n} |g(s,y)| |e^{(t-s)\Delta} \Delta \varphi(y)| dsdy \xrightarrow[t \to 0+]{} 0.
\end{equation}

To achieve this, we split $\Phi$ into two parts, the main term
\[ \Phi_1 := \int_0^{t/2} \int_{\bR^n} |g(s,y)| |e^{(t-s)\Delta} \Delta \varphi(y)| dsdy, \]
and the remainder for iteration
\[ R_1 := \int_{t/2}^t \int_{\bR^n} |g(s,y)| |e^{(t-s)\Delta} \Delta \varphi(y)| dsdy. \]

\subsubsection{Estimates of $\Phi_1$}
\label{sssec:Phi_1}
Thanks to duality of tent spaces (see \cite[Theorem 1(a)]{Coifman-Meyer-Stein1985_TentSpaces}), we have
\begin{equation}
    \label{e:Phi_1-dual-bd}
    \Phi_1 \lesssim \|\cC^{(2)}_{t/2}(g)\|_{L^\infty} \cA_t(\varphi),
\end{equation}
where the functional $\cA_t(\varphi)$ is given by
\[ \cA_t(\varphi) := \int_{\bR^n} \left( \int_0^{t/2} \fint_{B(x,s^{1/2})} |e^{(t-s)\Delta} \Delta \varphi(y)|^2 dsdy \right)^{1/2} dx. \]

The following lemma provides estimates of $\cA_t$.
\begin{lemma}
    \label{lemma:A_t}
    Let $\varphi \in \DotH^{1,1}$. Then
    \begin{equation}
        \label{e:A_t(phi)-bd}
        \sup_{t>0} \cA_t(\varphi) \lesssim \|\varphi\|_{\DotH^{1,1}}.
    \end{equation}
    Moreover,
    \begin{equation}
        \label{e:A_t(phi)-limit}
        \lim_{t \to 0+} \cA_t(\varphi) = \lim_{t \to \infty} \cA_t(\varphi) = 0.
    \end{equation}
\end{lemma}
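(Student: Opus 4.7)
The quantity $\cA_t(\varphi)$ equals the tent-space $T^{1,2}$-norm of $(s,y)\mapsto e^{(t-s)\Delta}\Delta\varphi(y)$ considered on $(s,y)\in(0,t/2)\times\bR^n$ and extended by zero for $s\ge t/2$. My plan has three steps: first, establish the uniform bound \eqref{e:A_t(phi)-bd} via tent-space duality, reducing it to a classical maximal regularity estimate for the heat semigroup into $\bmo^{-1}$; next, verify the vanishing limits \eqref{e:A_t(phi)-limit} by direct computation on the dense subclass $\scrS_\infty$; finally, pass to arbitrary $\varphi\in\DotH^{1,1}$ using that $\cA_t$ is a seminorm. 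The most delicate step is the uniform bound. A direct comparison with the Hardy--Sobolev tent-space characterization $\|\tau^{1/2}\Delta e^{\tau\Delta}\varphi\|_{T^{1,2}}\lesssim \|\varphi\|_{\DotH^{1,1}}$ via the change of variables $\tau=t-s$ fails, because the ball $B(x,(t-\tau)^{1/2})$ that arises does not match the natural heat scale $B(x,\tau^{1/2})$, and the naive volume comparison loses a factor $(\tau/(t-\tau))^{n/2}$ which is non-integrable at $\tau=t$ when $n\ge 2$. Duality bypasses this obstruction.

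\textbf{Uniform bound.} Using $T^{1,2}$--$T^{\infty,2}$ duality (recalled in Section~\ref{sec:spaces}) together with the self-adjointness of $e^{(t-s)\Delta}\Delta$ on $L^2(\bR^n)$,
\[
\cA_t(\varphi) \eqsim \sup_{\|v\|_{T^{\infty,2}}\le 1}\left|\int_{\bR^n}\varphi(y)\,\Psi_t(y)\,dy\right|,
\qquad \Psi_t(y):=\int_0^{t/2}\Delta e^{(t-s)\Delta}v(s,y)\,ds.
\]
By the $\DotH^{1,1}$--$\bmo^{-1}$ duality, it suffices to prove $\|\Psi_t\|_{\bmo^{-1}}\lesssim \|v\|_{T^{\infty,2}}$ uniformly in $t$. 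But $\Psi_t$ is the time-$t$ value of $\cL_0$ applied to the truncation of $v$ to $(0,t/2)$, and the bound $\|\cL_0(g)(t)\|_{\bmo^{-1}}\lesssim \|g\|_{T^{\infty,2}}$ is the standard maximal regularity estimate, following from the heat characterization of $\bmo^{-1}$ together with off-diagonal bounds on $(t-s)\Delta e^{(t-s)\Delta}$ (cf. \cite{Koch-Tataru2001-BMO-1,Auscher-Dubois-Tchamitchian2004-BMO-1}). This yields \eqref{e:A_t(phi)-bd}.

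\textbf{Limits and density.} For $\varphi\in\scrS_\infty$, $\Delta\varphi\in\scrS_\infty$, so $\widehat{\Delta\varphi}$ is Schwartz and vanishes to infinite order at the origin. A routine Fourier/Taylor argument then gives, for every $N,K\ge 0$,
\[
|e^{\tau\Delta}\Delta\varphi(y)|\le C_{N,K,\varphi}\,(1+\tau)^{-N}\,(1+|y|)^{-K},\qquad \tau\ge 0,\ y\in\bR^n.
\]
As $t\to 0+$, the $s$-interval has length $t/2$ and $|e^{(t-s)\Delta}\Delta\varphi(y)|\lesssim_K (1+|y|)^{-K}$ uniformly on $s\in(0,t/2)$; splitting the $x$-integration over $|x|\lesssim t^{1/2}$ and its complement and choosing $K>n$ gives $\cA_t(\varphi)\lesssim_\varphi t^{1/2}\to 0$. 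As $t\to\infty$, $\tau=t-s\ge t/2$ yields $|e^{(t-s)\Delta}\Delta\varphi(y)|\lesssim_{N,K,\varphi} t^{-N}(1+|y|)^{-K}$, and the same scheme gives $\cA_t(\varphi)\to 0$. Finally, since $\scrS_\infty$ is dense in $\DotH^{1,1}$ and $\cA_t$ is a seminorm in $\varphi$, the uniform bound combined with these limits on $\scrS_\infty$ yield \eqref{e:A_t(phi)-limit} for every $\varphi\in\DotH^{1,1}$ by a standard approximation argument.
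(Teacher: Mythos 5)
Your duality reduction is legitimate as far as it goes: $\cA_t(\varphi)$ is the $T^{1,2}$-norm of $(s,y)\mapsto \I_{(0,t/2)}(s)\,e^{(t-s)\Delta}\Delta\varphi(y)$, and the $(T^{1,2})^*\eqsim T^{\infty,2}$ duality combined with the $\DotH^{1,1}$--$\bmo^{-1}$ duality makes \eqref{e:A_t(phi)-bd} equivalent to the uniform-in-$t$ bound $\|\Psi_t\|_{\bmo^{-1}}\lesssim\|v\|_{T^{\infty,2}}$ for $\Psi_t=\int_0^{t/2}\Delta e^{(t-s)\Delta}v(s)\,ds$. The gap is that you then dispose of this by calling $\|\cL_0(g)(t)\|_{\bmo^{-1}}\lesssim\|g\|_{T^{\infty,2}}$ a ``standard maximal regularity estimate'' with a reference to \cite{Koch-Tataru2001-BMO-1,Auscher-Dubois-Tchamitchian2004-BMO-1}. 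It is not standard, and as you state it --- for general $g\in T^{\infty,2}$, i.e.\ with the $s$-integral running up to $t$ --- it is exactly the estimate this paper is unable to obtain from the tent-space norm alone: the bound \eqref{e:L0-Linfty-BMO-1} carries the extra term $\|g\|_{L^\infty_{-1/2}(\bR^{1+n}_+)}$, and the entire iteration of Subsection \ref{sssec:R_1} exists because the range $s\in(t/2,t)$ is not controlled by the Carleson condition; likewise \cite[Lemma 8]{Auscher-Dubois-Tchamitchian2004-BMO-1} uses the full $X$-norm, not only its Carleson part. Moreover, in the paper's logic \eqref{e:L0-Linfty-BMO-1} is a \emph{consequence} of Lemma \ref{lemma:A_t}, so invoking (a strengthening of) it to prove the lemma is circular. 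You have restated the problem in dual form and declared it known.

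The step is salvageable, and the reason is precisely the truncation to $s<t/2$ that your citation ignores: there $t-s\ge t/2$, so writing $\Delta e^{(t+\tau-s)\Delta}=\sigma^{-1}\cdot\sigma\Delta e^{\sigma\Delta}$ with $\sigma=t+\tau-s\eqsim t+\tau$, using Gaussian bounds on the kernel of $\sigma\Delta e^{\sigma\Delta}$ and Cauchy--Schwarz in $s$ against the Carleson condition on balls of radius $\ge (t/2)^{1/2}$, one gets the pointwise bound $|e^{\tau\Delta}\Psi_t(y)|\lesssim t^{1/2}(t+\tau)^{-1}\|v\|_{T^{\infty,2}}$, whence $\int_0^{r(B)^2}\fint_B|e^{\tau\Delta}\Psi_t|^2\,d\tau dy\le\int_0^\infty t(t+\tau)^{-2}d\tau\,\|v\|_{T^{\infty,2}}^2=\|v\|_{T^{\infty,2}}^2$ and the heat characterization of $\bmo^{-1}$ concludes. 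If you supply that computation, your proof of \eqref{e:A_t(phi)-bd} is complete and genuinely different from the paper's, which never dualizes: it pushes the ball average through the semigroup by Jensen's inequality and reduces to an atomic $H^1$ estimate (Lemma \ref{lemma:G*psi*a}). Your treatment of the limits \eqref{e:A_t(phi)-limit} --- rapid decay of $e^{\tau\Delta}\Delta\varphi$ in both variables for $\varphi\in\scrS_\infty$ from the vanishing moments, then density plus the uniform bound and the fact that $\cA_t$ is a seminorm --- is correct and parallels the paper's.
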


We first prepare a technical lemma. Denote by $G_t(x)$ the heat kernel
\[ G_t(x) := \frac{1}{(4\pi t)^{n/2}} e^{-\frac{|x|^2}{4t}}. \]

\begin{lemma}
    \label{lemma:G*psi*a}
    Let $\psi \in C^\infty(\bR^n)$ be so that $\psi_t(x) := t^{-n/2} \psi (t^{-1/2} x)$ satisfies pointwise Gaussian estimates, i.e.,
    \begin{equation}
        \label{e:psi_t(x)}
        |\psi_t(x)| \lesssim \frac{1}{(4\pi t)^{n/2}} e^{-\frac{|x|^2}{4t}} = G_t(x), \quad t>0, ~ x \in \bR^n.
    \end{equation}
    Then for any $a \in \DotH^{0,1}$,
    \begin{equation}
        \label{e:G*psi*a-bd}
        \sup_{t>0} \int_{\bR^n} \left( G_{3t/4} \ast |\psi_{t/4} \ast a|^2 \right)^{1/2} (x) dx \lesssim \|a\|_{\DotH^{0,1}}.
    \end{equation}
\end{lemma}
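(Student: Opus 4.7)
The plan is to exploit the atomic decomposition of $\DotH^{0,1} = H^1(\bR^n)$ together with a subadditivity observation. Set
\[ \Phi_t(a) := \int_{\bR^n} \left( G_{3t/4} \ast |\psi_{t/4} \ast a|^2 \right)^{1/2}(x)\, dx. \]
Since $(G_{3t/4} \ast |\cdot|^2)^{1/2}(x)$ is exactly the $L^2$-norm with respect to the finite measure $G_{3t/4}(x - \cdot)\,dy$, Minkowski's inequality in this $L^2$ gives $\Phi_t(a_1 + a_2) \le \Phi_t(a_1) + \Phi_t(a_2)$. Combined with any atomic decomposition $a = \sum_j \lambda_j a_j$ satisfying $\sum_j |\lambda_j| \lesssim \|a\|_{\DotH^{0,1}}$, the bound \eqref{e:G*psi*a-bd} will reduce to a uniform atomic estimate $\sup_{t > 0} \Phi_t(a) \lesssim 1$ over all $H^1$-atoms $a$.

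I would then fix such an atom $a$ supported in $B = B(0, r)$ with $\|a\|_\infty \le 1/|B|$ and $\int a = 0$, and split the analysis into two regimes. In the small-scale regime $\sqrt{t} \le r$, no cancellation is needed: the crude bound $|\psi_{t/4} \ast a(y)| \lesssim |B|^{-1}$ holds on $\{|y| \le 2r\}$, while the Gaussian decay of $\psi_{t/4}$ together with $|y - z| \ge |y|/2$ for $z \in B$ gives $|\psi_{t/4} \ast a(y)| \lesssim t^{-n/2} e^{-c|y|^2/t}$ on $\{|y| > 2r\}$. Splitting the $y$-integral defining $G_{3t/4} \ast |\psi_{t/4} \ast a|^2(x)$ according to this dichotomy, and then the outer $x$-integral into $\{|x| \le 4r\}$ and $\{|x| > 4r\}$, each of the resulting pieces is a two-Gaussian convolution contributing at most $\lesssim 1$ uniformly in $t$ (the key being $r^{-n/2} t^{n/4} \le 1$ in this regime). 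In the large-scale regime $\sqrt{t} > r$, I invoke cancellation via
\[ \psi_{t/4} \ast a(y) = \int_{\bR^n} \bigl[ \psi_{t/4}(y - z) - \psi_{t/4}(y) \bigr]\, a(z)\, dz, \]
coupled with the mean value theorem and Gaussian bounds on $\nabla \psi_{t/4}$ (the natural regularity assumption satisfied by any heat-type $\psi$), to get the pointwise estimate $|\psi_{t/4} \ast a(y)| \lesssim r\, t^{-(n+1)/2} e^{-c|y|^2/t}$. A single Gaussian convolution and then $x$-integration produce $\Phi_t(a) \lesssim r/\sqrt{t} \le 1$.

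The delicate step is the large-scale regime, where the Taylor-expansion argument hinges on Gaussian control for $\nabla \psi_{t/4}$, strictly stronger than the stated bounds on $\psi_t$ alone. This extra regularity is automatic in every intended application of the lemma (where $\psi$ comes from a heat semigroup), so the hypothesis on $\psi$ should tacitly be read in this way; a purely self-contained alternative would replace the Taylor step by a non-tangential maximal function argument based on the $H^1$ maximal characterization, but this still demands analogous regularity of $\psi$.
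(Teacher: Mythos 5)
Your reduction to a uniform bound over $H^1$-atoms, via Minkowski's inequality in $L^2(G_{3t/4}(x-\cdot)\,dy)$, is exactly how the paper begins, and your small-scale regime $\sqrt{t}\le r$ goes through. The genuine problem is the large-scale regime: the mean-value/Taylor step there requires Gaussian bounds on $\nabla\psi_{t/4}$, which the lemma does not assume --- the only hypothesis is the pointwise bound \eqref{e:psi_t(x)} on $\psi_t$ itself, and a smooth $\psi$ can satisfy \eqref{e:psi_t(x)} while its gradient oscillates far outside any Gaussian envelope. You flag this and propose to read the extra regularity tacitly into the hypothesis, but that is not the right fix: the lemma is true exactly as stated, so a proof that needs more is incomplete, and the cancellation $\int a=0$ turns out to be irrelevant as well.

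The paper's proof shows that neither the moment condition of the atom nor any regularity of $\psi$ beyond \eqref{e:psi_t(x)} is needed. For an atom supported in a ball $B$ centered at $0$ one has $|a|\le \I_B/|B|\lesssim G_{r(B)^2}$, hence by \eqref{e:psi_t(x)} and the semigroup property $|\psi_{t/4}\ast a|\lesssim G_{t/4}\ast G_{r(B)^2}=G_{t/4+r(B)^2}$. Since the square of a Gaussian is a constant multiple of a smaller-variance Gaussian, the quantity $\int_{\bR^n}\bigl(G_{3t/4}\ast G_{t/4+r(B)^2}^2\bigr)^{1/2}dx$ can be computed in closed form and equals, up to constants, $\bigl((7t+4r(B)^2)/(2t+8r(B)^2)\bigr)^{n/4}\le (7/2)^{n/4}$, uniformly in $t$ and $r(B)$; this covers both of your regimes at once with no case split. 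If you wish to retain your structure, you must at least replace the large-scale Taylor argument by this Gaussian domination; as written, that step is a gap relative to the stated hypotheses.
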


\begin{proof}
    Recall that $\DotH^{0,1}$ identifies with the Hardy space $H^1(\bR^n)$. We say a function $a$ on $\bR^n$ is an \emph{$\DotH^{0,1}$-atom} if there exists a ball $B \subset \bR^n$ so that $\supp(a) \subset B$, $\|a\|_{L^\infty} \le |B|^{-1}$, and $\int_{\bR^n} a(x) dx=0$. By atomic decomposition of Hardy spaces (see e.g. \cite[\S III.2.2, Theorem 2]{Stein1993-HA}), it suffices to prove that there is a uniform constant $C>0$ so that for any $a$ as an $\DotH^{0,1}$-atom,
    \begin{equation}
        \label{e:G*psi*atom-bdd}
        \sup_{t>0} \int_{\bR^n} \left( G_{3t/4} \ast |\psi_{t/4} \ast a|^2 \right)^{1/2} (x) dx \le C.
    \end{equation}

    Let $a$ be an $\DotH^{0,1}$-atom and $B$ be the ball so that $\supp(a) \subset B$ and $\|a\|_{L^\infty} \le |B|^{-1}$. By translation, we may assume that $B$ is centered at 0. Note that for any ball $B$ centered at 0, direct computation shows
    \begin{equation}
        \label{e:I_B<G_r(B)^2}
        \frac{\I_B}{|B|}(x) \lesssim G_{r(B)^2}(x), \quad x \in \bR^n.
    \end{equation} 
    We hence get $|a(x)| \le \frac{\I_B}{|B|}(x) \lesssim G_{r(B)^2}(x)$ for any $x \in \bR^n$. Then using \eqref{e:psi_t(x)} and the semigroup property, we obtain that for any $t>0$,
    \begin{align*}
        \int_{\bR^n} \left( G_{3t/4} \ast |\psi_{t/4} \ast a|^2 \right)^{1/2} (x) dx
        &\lesssim \int_{\bR^n} \left( G_{3t/4} \ast |G_{t/4} \ast G_{r(B)^2}|^2 \right)^{1/2} (x) dx \\
        &\lesssim \int_{\bR^n} \left( G_{3t/4} \ast G_{t/4+r(B)^2}^2 \right)^{1/2} (x) dx \\
        &\lesssim \left( \frac{7t+4r(B)^2}{2t+8r(B)^2} \right)^{n/4},
    \end{align*}
    which is uniformly bounded by a constant independent of $t$ and $r(B)$. This proves \eqref{e:G*psi*atom-bdd} and hence completes the proof.
\end{proof}

Now, we provide the proof of Lemma \ref{lemma:A_t}.

\begin{proof}[Proof of Lemma \ref{lemma:A_t}]
    Let $t>0$, $x \in \bR^n$, and $0<s<t/2$. Jensen's inequality yields
    \[ |e^{(t-s)\Delta} \Delta \varphi|^2(x) \le e^{(\frac{3}{4}t-s)\Delta} |e^{\frac{1}{4}t\Delta} \Delta \varphi|^2 (x). \]
    Using \eqref{e:I_B<G_r(B)^2} and the semigroup property of the heat kernel, we get
    \begin{align*}
        \fint_{B(x,s^{1/2})} |e^{(t-s)\Delta} \Delta \varphi(y)|^2 dy 
        &\le \int_{\bR^n} \frac{\I_{B(0,s^{1/2})}}{|B(0,s^{1/2})|} (x-y) e^{(\frac{3}{4}t-s)\Delta} |e^{\frac{1}{4}t\Delta} \Delta \varphi|^2 (y) dy \\
        &\lesssim \int_{\bR^n} G_s(x-y) e^{(\frac{3}{4}t-s)\Delta} |e^{\frac{1}{4}t\Delta} \Delta \varphi|^2 (y) dy \\
        &\lesssim e^{\frac{3}{4}t\Delta} |e^{\frac{1}{4}t\Delta} \Delta \varphi|^2 (x).
    \end{align*}
    Applying this estimate on $\cA_t(\varphi)$ gives
    \begin{equation}
        \label{e:A_t(phi)<G*G*-bd}
        \cA_t(\varphi) \lesssim \int_{\bR^n} \left( t e^{\frac{3}{4}t\Delta} |e^{\frac{1}{4}t\Delta} \Delta \varphi|^2 \right)^{1/2}(x) dx.
    \end{equation}
    
    To prove \eqref{e:A_t(phi)-bd}, since the kernel of $(t^{1/2} e^{t\Delta} \Div)_{t>0}$ satisfies pointwise Gaussian estimates (cf. \eqref{e:psi_t(x)}), we infer from Lemma \ref{lemma:G*psi*a} that
    \[ \cA_t(\varphi) \lesssim \int_{\bR^n} \left( e^{\frac{3}{4}t\Delta} |t^{1/2} e^{\frac{1}{4}t\Delta} \Div \nabla \varphi|^2 \right)^{1/2}(x) dx \lesssim \|\nabla \varphi\|_{\DotH^{0,1}} \eqsim \|\varphi\|_{\DotH^{1,1}}. \]
    The implicit constant is independent of $t$, so \eqref{e:A_t(phi)-bd} follows by taking supremum over $t>0$.

    To prove \eqref{e:A_t(phi)-limit}, by density of $\scrS_\infty$ in $\DotH^{1,1}$, it suffices to prove the limit for $\varphi \in \scrS_\infty$, where in particular, we have $\varphi \in \DotH^{0,1}$ and $\Delta \varphi \in \DotH^{0,1}$. Then \eqref{e:A_t(phi)<G*G*-bd} gives
    \[ \cA_t(\varphi) \lesssim t^{1/2} \int_{\bR^n} \left( G_{3t/4} \ast |G_{t/4} \ast \Delta \varphi|^2 \right)^{1/2}(x) dx \lesssim t^{1/2} \|\Delta \varphi\|_{\DotH^{0,1}}, \]
    which tends to 0 as $t \to 0+$. On the other hand, using the fact that the kernel of $(t \Delta e^{t\Delta})_{t>0}$ satisfies pointwise Gaussian estimates, we get
    \[ \cA_t(\varphi) \lesssim t^{-1/2} \int_{\bR^n} \left( e^{\frac{3}{4}t\Delta} |t\Delta e^{\frac{1}{4}t\Delta} \varphi|^2 \right)^{1/2}(x) dx \lesssim t^{-1/2} \|\varphi\|_{\DotH^{0,1}}, \]
    which tends to 0 as $t \to \infty$. This proves \eqref{e:A_t(phi)-limit}.
\end{proof}

\subsubsection{Estimates of $R_1$}
\label{sssec:R_1}
We first prepare a lemma.
\begin{lemma}
    \label{lemma:f-layer}
    Let $0<\sigma<\tau<\infty$. For any $g \in L^\infty_{-1/2}(\bR^{1+n}_+)$, the function $\tilg(s,y):=g(\sigma+s,y)$ satisfies
    \[ \|\cC^{(2)}_{\tau-\sigma}(\tilg)\|_{L^\infty} \lesssim (\tau-\sigma)^{1/2} \sigma^{-1/2} \|g\|_{L^\infty_{-1/2}(\bR^{1+n}_+)}. \]
\end{lemma}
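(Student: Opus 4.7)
The plan is to reduce everything to a pointwise bound coming from the $L^\infty_{-1/2}$ norm and then integrate trivially.

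First, I would record that the definition of $\|g\|_{L^\infty_{-1/2}(\bR^{1+n}_+)}$ gives the pointwise estimate
\[ |g(t,y)| \le t^{-1/2} \|g\|_{L^\infty_{-1/2}(\bR^{1+n}_+)}, \quad t>0, ~ y \in \bR^n. \]
Applied to $\tilg(s,y) = g(\sigma+s,y)$ for $s \in (0,\tau-\sigma)$, this becomes
\[ |\tilg(s,y)| \le (\sigma+s)^{-1/2} \|g\|_{L^\infty_{-1/2}(\bR^{1+n}_+)} \le \sigma^{-1/2} \|g\|_{L^\infty_{-1/2}(\bR^{1+n}_+)}, \]
since $s>0$.

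Next, I would plug this into the definition of the Carleson functional. For any $x \in \bR^n$ and any ball $B \subset \bR^n$ with $x \in B$ and $r(B)^2 < \tau-\sigma$, the upper bound above yields
\[ \int_0^{r(B)^2} \fint_B |\tilg(s,y)|^2 ds dy \le r(B)^2 \cdot \sigma^{-1} \|g\|_{L^\infty_{-1/2}(\bR^{1+n}_+)}^2 \le (\tau-\sigma) \sigma^{-1} \|g\|_{L^\infty_{-1/2}(\bR^{1+n}_+)}^2, \]
where in the last step I use the constraint $r(B)^2 < \tau-\sigma$. Taking square roots and then the supremum over admissible balls $B$ and over $x \in \bR^n$ gives exactly the claimed inequality.

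There is essentially no obstacle; the only subtlety is that the crude estimate $(\sigma+s)^{-1/2} \le \sigma^{-1/2}$ is what forces the factor $\sigma^{-1/2}$ on the right-hand side, and the constraint $r(B)^2 < \tau-\sigma$ in the Carleson functional is what produces the factor $(\tau-\sigma)^{1/2}$. No averaging or covering argument is needed, and the implicit constant is in fact $1$.
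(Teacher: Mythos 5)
Your proof is correct and is essentially identical to the paper's argument: both bound $|\tilg(s,y)|^2$ by $(\sigma+s)^{-1}\|g\|_{L^\infty_{-1/2}(\bR^{1+n}_+)}^2 \le \sigma^{-1}\|g\|_{L^\infty_{-1/2}(\bR^{1+n}_+)}^2$ and then use the constraint $r(B)^2<\tau-\sigma$ in the Carleson functional. The only cosmetic caveat is that the $L^\infty$ bound holds almost everywhere rather than pointwise, which is all that is needed.
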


\begin{proof}
    This lemma is due to \cite[Lemma 7]{Auscher-Dubois-Tchamitchian2004-BMO-1}. For sake of completeness, we provide the computation. Let $B$ be a ball in $\bR^n$ with $r(B)^2<\tau-\sigma$. Then
    \begin{align*}
        \int_0^{r(B)^2} \fint_B |\tilg(s,y)|^2 dsdy
        &= \int_0^{r(B)^2} \fint_B (\sigma+s)^{-1} |(\sigma+s)^{\frac{1}{2}} g(\sigma+s,y)|^2 dsdy \\
        &\quad \le \sigma^{-1} (\tau-\sigma) \|g\|_{L^\infty_{-1/2}(\bR^{1+n}_+)}^2.
    \end{align*}
    Taking supremum on such $B$ gives the desired formula.
\end{proof}

We now provide estimates of $R_1$. Define $\tilg(s,y):=g(s+t/2,y)$. Using change of variable, we have
\[ R_1 = \int_0^{t/2} \int_{\bR^n} |\tilg(s,y)| |e^{(\frac{t}{2}-s)\Delta} a(y)| dsdy. \]
Similarly, we also split $R_1$ into two parts, the main term
\[ \Phi_2 := \int_0^{t/4} \int_{\bR^n} |\tilg(s,y)| |e^{(\frac{t}{2}-s)\Delta} a(y)| dsdy, \]
and the remainder for second iteration
\[ R_2 := \int_{t/4}^{t/2} \int_{\bR^n} |\tilg(s,y)| |e^{(\frac{t}{2}-s)\Delta} a(y)| dsdy. \]
Then the same deduction as $\Phi_1$ (cf. \eqref{e:Phi_1-dual-bd}) implies
\[ \Phi_2 \lesssim \|\cC^{(2)}_{t/4} (\tilg)\|_{L^\infty} \cA_{t/2}(\varphi) \lesssim \left( \frac{2^{-2}}{1-2^{-1}} \right)^{1/2} \|g\|_{L^\infty_{-1/2}(\bR^{1+n}_+)} \cA_{t/2}(\varphi). \]
The last inequality comes from applying Lemma \ref{lemma:f-layer} on $\sigma=(1-2^{-1})t$ and $\tau=(1-2^{-2})t$. Therefore, by iteration, we obtain
\begin{align*}
    R_1 
    &= \sum_{k=2}^\infty \int_0^{2^{-k} t} \int_{\bR^n} \left| g\left( (1-2^{-(k-1)})t+s,y \right) \right| |e^{(2^{-(k-1)} t -s)\Delta} a(y)| dsdy \\
    &\lesssim \sum_{k=2}^\infty 2^{-k/2} \|g\|_{L^\infty_{-1/2}(\bR^{1+n}_+)} \cA_{2^{-k+1} t}(\varphi).
\end{align*}

Combining it with \eqref{e:Phi_1-dual-bd} and \eqref{e:A_t(phi)-bd} gives
\begin{equation}
    \label{e:Phi-bd}
    \begin{aligned}
        \Phi 
        &\lesssim \|\cC^{(2)}_{t/2}(g)\|_{L^\infty} \cA_t(\varphi) + \sum_{k=2}^\infty 2^{-k/2} \|g\|_{L^\infty_{-1/2}(\bR^{1+n}_+)} \cA_{2^{-k+1} t}(\varphi) \\
        &\lesssim (\|g\|_{T^{\infty,2}}+\|g\|_{L^\infty_{-1/2}(\bR^{1+n}_+)}) \|\varphi\|_{\DotH^{1,1}}.
    \end{aligned}
\end{equation}
Moreover, for each $k \ge 1$, Lemma \ref{lemma:A_t} ensures that $\cA_{2^{-k+1} t}(\varphi)$ tends to 0 as $t \to 0$. Since the series $\sum_{k=2}^\infty 2^{-k/2} \|g\|_{L^\infty_{-1/2}(\bR^{1+n}_+)} \|\varphi\|_{\DotH^{1,1}}$ converges, Lebesgue's dominated convergence theorem yields $\Phi$ tends to 0 as $t \to 0+$. This proves \eqref{e:Phi->0}.

Meanwhile, by density of $\scrS_\infty$ in $\DotH^{1,1}$, we get from \eqref{e:Phi-bd} that for any $t>0$ and $g \in L^\infty_{-1/2}(\bR^{1+n}_+) \cap T^{\infty,2}$, $\cL_0(g)(t)$ lies in $\bmo^{-1}$ with
\begin{equation}
    \label{e:L0-Linfty-BMO-1}
    \sup_{t>0} \|\cL_0(g)(t)\|_{\bmo^{-1}} \lesssim \|g\|_{T^{\infty,2}} + \|g\|_{L^\infty_{-1/2}(\bR^{1+n}_+)}.
\end{equation}
Therefore, applying \eqref{e:Phi->0} and a density argument gives that $\cL_0(g)(t)$ tends to 0 in $\bmo^{-1}$ as $t \to 0+$.

\subsection{Continuity at \texorpdfstring{$t>0$}{t>0}}
\label{ssec:L0-t>0}
Next, we consider the continuity at $t>0$. Without loss of generality, we show that $\cL_0(g)(t+h)-\cL_0(g)(t)$ tends to 0 in $\bmo^{-1}$ as $h \to 0+$. The other side follows similarly. Fix $\varphi \in \scrS_\infty$. Fubini's theorem again yields
\begin{align*}
    &|\langle \cL_0(g)(t+h)-\cL_0(g)(t), \varphi \rangle_{L^2(\bR^n)}| \\
    &\quad \le \int_0^t \int_{\bR^n} |g(s,y)| \left|  e^{(t-s)\Delta} (e^{h\Delta} - \bI)\Delta \varphi (y) \right| dsdy \\
    &\qquad + \int_t^{t+h} \int_{\bR^n} |g(s,y)| \left| e^{(t+h-s)\Delta} \Delta \varphi (y) \right| dsdy =: I + II.
\end{align*}

For $I$, using \eqref{e:Phi-bd}, we have
\[ I \lesssim (\|g\|_{T^{\infty,2}} + \|g\|_{L^\infty_{-1/2}(\bR^{1+n}_+)}) \|(e^{h\Delta} - \bI)\varphi\|_{\DotH^{1,1}}. \]
Since the heat semigroup is strongly continuous on $\DotH^{1,1}$, we know that $\|(e^{h\Delta} - \bI)\varphi\|_{\DotH^{1,1}}$ tends to 0 as $h \to 0+$, and so is $I$.

For $II$, using change of variable, one gets
\[ II = \int_0^h \int_{\bR^n} |g(t+s,y)| |e^{(h-s)\Delta} \Delta \varphi(y)| dsdy. \]
Write $\tilg(s,y):=g(t+s,y)$. Lemma \ref{lemma:f-layer} implies that $\|\cC^{(2)}_{h/2}(\tilg)\|_{L^\infty} \lesssim h^{1/2} t^{-1/2} \|g\|_{L^\infty_{-1/2}(\bR^{1+n}_+)}$, and direct computation shows $\|\tilg\|_{L^\infty_{-1/2}(\bR^{1+n}_+)} \lesssim \|g\|_{L^\infty_{-1/2}(\bR^{1+n}_+)}$. So applying \eqref{e:Phi-bd} to $t=h$ and $g=\tilg$, we obtain
\begin{align*}
    II 
    &\lesssim \|\cC^{(2)}_{h/2}(\tilg)\|_{L^\infty} \cA_h(\varphi) + \sum_{k=2}^\infty 2^{-k/2} \|\tilg\|_{L^\infty_{-1/2}(\bR^{1+n}_+)} \cA_{2^{-k+1} h}(\varphi) \\
    &\lesssim \|g\|_{L^\infty_{-1/2}(\bR^{1+n}_+)} \left( h^{1/2} t^{-1/2} \cA_h(\varphi)  + \sum_{k=2}^{\infty} 2^{-k/2} \cA_{2^{-k+1} h}(\varphi) \right),
\end{align*}
which also converges to 0 as $h \to 0+$ by Lebesgue's dominated convergence theorem. A density argument thus yields $\cL_0(g)(t+h)$ converges to $\cL_0(g)(t)$ in $\bmo^{-1}$ as $h \to 0+$.

\subsection{Long-time limit}
\label{ssec:L0-long}
We show that $\cL_0(g)(t)$ tends to 0 in $\bmo^{-1}$ as $t \to \infty$. Pick $\varphi \in \scrS_\infty$. We infer from \eqref{e:Phi-bd} that
\[ |\langle \cL_0(g)(t),\varphi \rangle| \lesssim \|g\|_{T^{\infty,2}} \cA_t(\varphi) + \sum_{k=2}^\infty 2^{-k/2} \|g\|_{L^\infty_{-1/2}(\bR^{1+n}_+)} \cA_{2^{-k+1} t}(\varphi). \]
Using Lemma \ref{lemma:A_t} again, we get for any $k \ge 1$, $\cA_{2^{-k+1} t}(\varphi) \lesssim \|\varphi\|_{\DotH^{1,1}}$ and $\cA_{2^{-k+1} t}(\varphi)$ tends to 0 as $t \to \infty$. Ensured by the convergence of the series $\sum_{k=2}^\infty 2^{-k/2} \|g\|_{L^\infty_{-1/2}(\bR^{1+n}_+)} \|\varphi\|_{\DotH^{1,1}}$, Lebesgue's dominated convergence theorem yields that $\langle \cL_0(g)(t),\varphi \rangle$ tends to 0 as $t \to \infty$. By density, we get $\cL_0(g)$ tends to 0 in $\bmo^{-1}$ as $t \to \infty$. 

This completes the proof.
\section{Proof of Proposition \ref{prop:L1}}
\label{sec:L1}

This section is devoted to proving Proposition \ref{prop:L1}. Again, $\bmo^{-1}$ is endowed with the weak*-topology with respect to $\DotH^{1,1}$.

\subsection{Continuity at \texorpdfstring{$t=0$}{t=0}}
\label{ssec:L1-t=0}
Pick $\varphi \in \scrS_\infty$. Fubini's theorem yields
\[ \langle \cL_1(f)(t),\varphi \rangle_{L^2(\bR^n)} = -\int_0^t \int_{\bR^n} f(s,y) e^{(t+s)\Delta} \bP \nabla \varphi (y) dsdy. \]
Write $b:=\bP \nabla \varphi$ and we get
\begin{align*}
    |\langle \cL_1(f)(t),\varphi \rangle_{L^2(\bR^n)}|
    &\le \int_0^t \int_{\bR^n} |f(s,y)| |e^{s\Delta} b(y)| dsdy \\
    &\quad + \int_0^t \int_{\bR^n} |f(s,y)| |e^{s\Delta} (e^{t\Delta}-\bI)b(y)| dsdy.
\end{align*}
Denote by $I$ and $II$ the first term and the second term on the right-hand side, respectively. 

For $I$, we claim that the function $(s,y) \mapsto |f(s,y)| |e^{s\Delta} b(y)|$ belongs to $L^1(\bR^{1+n}_+)$. If it holds, then Lebesgue's dominated convergence theorem yields that $I$ tends to 0 as $t \to 0+$.

Let us verify the claim. Using duality of tent spaces cf. \eqref{e:dual-T^infty,1}, we get
\begin{equation}
    \label{e:dual-f-b}
    \int_0^\infty \int_{\bR^n} |f(s,y)| |e^{s\Delta} b(y)| dsdy \lesssim \|f\|_{T^{\infty,1}} \| e^{s\Delta} b\|_{T^{1,\infty}}.
\end{equation}
Since $\bP$ is bounded on the Hardy space $H^1(\bR^n) \simeq \DotH^{0,1}$, we know that $b=\bP \nabla \varphi$ lies in $\DotH^{0,1}$. Thanks to \cite{Fefferman-Stein1972-Hp}, we get $e^{s\Delta} b$ lies in $T^{1,\infty}$ with
\begin{equation}
    \label{e:heat-H0,1}
    \|e^{s\Delta} b\|_{T^{1,\infty}} \lesssim \|b\|_{\DotH^{0,1}} \lesssim \|\varphi\|_{\DotH^{1,1}}.
\end{equation}
The claim hence follows. This proves $I$.

For $II$, the above deduction yields
\[ II \lesssim \|f\|_{T^{\infty,1}} \|(e^{t\Delta}-\bI) b\|_{\DotH^{0,1}}. \]
Since the heat semigroup $(e^{t\Delta})$ is strongly continuous on $\DotH^{0,1}$, one gets $\|(e^{t\Delta}-\bI) b\|_{\DotH^{0,1}}$ tends to 0 as $t \to 0+$, and so is $II$. 

In fact, by density of $\scrS_\infty$ in $\DotH^{1,1}$, the above deduction also implies that for any $t>0$ and $f \in T^{\infty,1}$, $\cL_1(f)(t)$ belongs to $\bmo^{-1}$ with
\begin{equation}
    \label{e:L1-Linfty-BMO-1}
    \sup_{t>0} \|\cL_1(f)(t)\|_{\bmo^{-1}} \lesssim \|f\|_{T^{\infty,1}}.
\end{equation}
A density argument shows $\cL_1(f)(t)$ tends to 0 in $\bmo^{-1}$ as $t \to 0+$.

\begin{remark}
    \label{rem:B(u,u)-Linfty-BMO-1}
    Combining \eqref{e:L0-Linfty-BMO-1} and \eqref{e:L1-Linfty-BMO-1}, we rediscover the inequality \eqref{e:B(u,u)-Linfty-BMO-1}. But the argument in \cite[Lemma 8]{Auscher-Dubois-Tchamitchian2004-BMO-1} is much simpler.
\end{remark}

\subsection{Continuity at \texorpdfstring{$t>0$}{t>0}}
\label{ssec:L1-t>0}
Now fix $t>0$. Again, it is enough to prove that $\cL_1(f)(t+h)-\cL_1(f)(t)$ tends to 0 in $\bmo^{-1}$ as $h \to 0+$. Pick $\varphi \in \scrS_\infty$ and write $b=\bP \nabla \varphi$. Fubini's theorem yields
\begin{align*}
    &|\langle \cL_1(f)(t+h)-\cL_1(f)(t), \varphi \rangle_{L^2(\bR^n)}| \\
    &\quad \le \int_0^{t+h} \int_{\bR^n} |f(s,y)| |e^{s\Delta} (e^{(t+h)\Delta}-e^{t\Delta}) b(y)| dsdy \\
    &\qquad + \int_t^{t+h} \int_{\bR^n} |f(s,y)| |e^{(t+s)\Delta} b(y)| dsdy =: I + II.
\end{align*}

For $I$, we infer from \eqref{e:dual-f-b} and \eqref{e:heat-H0,1} that
\[ I \lesssim \|f\|_{T^{\infty,1}} \|(e^{(t+h)\Delta}-e^{t\Delta}) b\|_{\DotH^{0,1}}. \]
Using strong continuity of $(e^{t\Delta})$ on $\DotH^{0,1}$, we get $\|(e^{(t+h)\Delta}-e^{t\Delta}) b\|_{\DotH^{0,1}}$ tends to 0 as $h \to 0+$, and so is $I$.

For $II$, uniform boundedness of $(e^{t\Delta})$ on $\DotH^{0,1}$ implies that the function $(s,y) \mapsto |f(s,y)| |e^{(t+s)\Delta} b(y)|$ belongs to $L^1(\bR^{1+n}_+)$ as
\[ \int_0^\infty \int_{\bR^n} |f(s,y)| |e^{(t+s)\Delta} b(y)| dsdy \lesssim \|f\|_{T^{\infty,1}} \|e^{t\Delta} b\|_{\DotH^{0,1}} \lesssim \|f\|_{T^{\infty,1}} \|b\|_{\DotH^{0,1}}. \]
Lebesgue's dominated convergence theorem hence yields $II$ tends to 0 as $h \to 0+$. A density argument concludes that $\cL_1(f)(t+h)$ converges to $\cL_1(f)(t)$ in $\bmo^{-1}$ as $h \to 0+$. 

\subsection{Long-time limit}
\label{ssec:L1-long}
Let us finish by proving $\cL_1(f)(t)$ tends to 0 in $\bmo^{-1}$ as $t \to \infty$. Pick $\varphi \in \scrS_\infty$ and write $b=\bP \nabla \varphi$. Note that
\begin{align*}
    |\langle \cL_1(f)(t),\varphi \rangle_{L^2(\bR^n)}| 
    &\le \int_0^t \int_{\bR^n} |f(s,y)| |e^{(t+s)\Delta} b(y)| dsdy \\
    &\lesssim \|f\|_{T^{\infty,1}} \|e^{t\Delta} b\|_{\DotH^{0,1}}.
\end{align*}
As $b \in \DotH^{0,1}$, the function $t \mapsto e^{t\Delta} b$ tends to 0 in $\DotH^{0,1}$ as $t \to \infty$, so by density again, we obtain that $\cL_1(f)(t)$ tends to 0 in $\bmo^{-1}$ as $t \to \infty$. 

This completes the proof.

\subsection*{Copyright}
A CC-BY 4.0 \url{https://creativecommons.org/licenses/by/4.0/} public copyright license has been applied by the authors to the present document and will be applied to all subsequent versions up to the Author Accepted Manuscript arising from this submission.

\bibliographystyle{alpha}
\bibliography{sample}

\end{document}